\newcommand{\nc}{\newcommand}
\theoremstyle{plain} 
\newtheorem{thm}{Theorem}
\nc{\bthm}{\begin{thm}} \nc{\ethm}{\end{thm}}
\newtheorem{prop}[thm]{Proposition}
\nc{\bprp}{\begin{prop}} \nc{\eprp}{\end{prop}}
\newtheorem{oques}[thm]{Open Question}
\newtheorem{fact}[thm]{Fact}
\nc{\bfct}{\begin{fact}} \nc{\efct}{\end{fact}}
\newtheorem{prob}[thm]{Problem}
\nc{\bprb}{\begin{prob}} \nc{\eprb}{\end{prob}}
\newtheorem{lem}[thm]{Lemma}
\nc{\blem}{\begin{lem}} \nc{\elem}{\end{lem}}
\newtheorem{claim}[thm]{Claim}
\nc{\bclm}{\begin{claim}} \nc{\eclm}{\end{claim}}
\newtheorem{cor}[thm]{Corollary}
\nc{\bcor}{\begin{cor}} \nc{\ecor}{\end{cor}}
\newtheorem{conj}[thm]{Conjecture}
\nc{\bcnj}{\begin{conj}} \nc{\ecnj}{\end{conj}}
\theoremstyle{definition}
\newtheorem{defn}[thm]{Definition}
\nc{\bdfn}{\begin{defn}} \nc{\edfn}{\end{defn}}
\newtheorem{observation}[thm]{Observation}
\nc{\bobs}{\begin{observation}} \nc{\eobs}{\end{observation}}
\theoremstyle{remark}
\newtheorem{rem}[thm]{Remark}
\nc{\brem}{\begin{rem}} \nc{\erem}{\end{rem}}
\newtheorem{cnv}[thm]{Convention}
\nc{\bcnv}{\begin{cnv}} \nc{\ecnv}{\end{cnv}}
\newtheorem{exam}[thm]{Example}
\nc{\bexm}{\begin{exam}} \nc{\eexm}{\end{exam}}
\nc{\bpf}{\begin{proof}} \nc{\epf}{\end{proof}}
\nc{\be}{\begin{enumerate}}
	\nc{\ee}{\end{enumerate}}
\nc{\bi}{\begin{itemize}}
	\nc{\itm}{\item}
	\nc{\ei}{\end{itemize}}
\nc{\invlim}{\lim_{\leftarrow}}
\nc{\dirlim}{\lim_{\rightarrow}}
\nc{\mm}{\mathbf{m}}
\nc{\nn}{\mathbf{n}}
\nc{\kk}{\mathbf{k}}
\nc{\FF}{\mathcal{F}}
\nc{\CC}{\mathcal{C}}
\nc{\Span}{\operatorname{span}}
\nc{\Img}{\operatorname{Im}}
\nc{\rank}{\operatorname{rank}}
\nc{\proj}{\operatorname{proj}}
\nc{\F}{\mathbb{F}}
\nc{\Z}{\mathbb{Z}}
\nc{\Q}{\mathbb{Q}}
\nc{\Br}{\operatorname{Br}}
\title{ Groups of profinite type and profinite rigidity}
\author{Tamar Bar-On and Nikolay Nikolov}
\date{\today}
\begin{document}
	
	\maketitle
	
	\begin{abstract}
		We say that a group $G$ is of \textit{profinite type} if it can be realized as a Galois group of some field extension. Using Krull's theory, this is equivalent to the ability of $G$ to be equipped with a profinite topology. We also say that a group of profinite type is \textit{profinitely rigid} if it admits a unique profinite topology. In this paper we study when abelian groups and some group extensions are of profinite type or profinitely rigid. We also discuss the connection between the properties of profinite type and profinite rigidity to the injectivity and surjectivity of the cohomology comparison maps, which were studied by Sury and other authors.
	\end{abstract}
	\section{Introduction}
	Let $G$ be a group. An old result states that if $G$ is finite then it can be realized as a Galois group of some field extension. It is still an open question, however, if every finite group can be realized as a Galois group over $\mathbb{Q}$- this is "the inverse Galois problem".
	
	For infinite groups the situation is much more mysterious. In his groundbreaking paper from 1928, \cite{krull1928galoissche} Wolfgang Krull proved that every Galois group can be endowed with a group topology which is called "the Krull topology", and is compact, Hausdorff and totally-disconnected. This work has been completed in 1964 by Douady, \cite{douady1964determination}, who proved that every topological group which is compact, Hausdorff and totally disconnected can be realized as a Galois group over some field. Such groups are known as \emph{profinite groups} and are exactly the groups which are inverse limit of finite groups. Hence, they admit a local basis of the identity that consists of some of their finite-index subgroups. We call the Krull topology of a profinite group a \textit{profinite topology}. This should not be confused with the (generally noncompact) topology which has  all subgroups of finite index as a local basis of the identity. For clarity we will refer to the latter as the "finite-index topology". These two topologies might differ on a given profinite group (see \cite{ribes2000profinite}). 
	
	Let $G$ be an abstract group. We say that $G$ is of profinite type if it can be given a profinite group topology. Hence, these are precisely the groups that can be realized as the Galois group of some field extension. 
	There are a few well-known restrictions on the algebraic structure of a profinite group, and hence of abstract groups of profinite type. For example, since the intersection of all open subgroups of a profinite group is trivial, a group of profinite type is residually finite. In addition, following Baire's Category Theorem, a group of profinite type is either finite or uncountable.   
	
	Identifying all groups of profinite type among the infinite abstract groups is an extremely difficult task. However, there are some natural candidates we shall examine, namely the residually finite extensions of profinite groups and in particular subgroups of finite index of profinite groups.
	
	Given a group of profinite type it is natural to ask how many different profinite topologies can be given on it, up to a continuous isomorphism. We say that a group $G$ of profinite type is \textit{profinitely rigid} (or just rigid) if $G$ admits a unique profinite topology. A first example is the variety of finite groups with the discrete topology. We say that $G$ is \emph{weakly rigid} if all profinite topologies on $G$ are equivalent as topological groups. In that case we say that $G$ admits a \textit{unique profinite type}. For example if we assume the Continuum Hypothesis the profinite group $(\Z/p\Z)^\mathbb N$ is  weakly rigid but not rigid.
	
	The object of this paper is to investigate the groups of profinite type, profinite rigidity and the connections between different possible profinite topologies on a given abstract group.
	
	The paper is organized as follows: In  Section 2 we give a necessary and sufficient criterion for some abelian groups to be of profinite type with applications. In Section 3 we prove some general results on profinite type groups and  profinite rigidity. In addition, we study profinite invariants of groups of profinite type - i.e. topological properties that are preserved under abstract isomorphism. In section 4 we discuss connections with cohomological goodness of abstract and profinite groups.
	
	\section{Abelian groups of profinite type}
	We deal first with torsion abelian groups. By \cite[Lemma 4.3.7]{ribes2000profinite}, a torsion abelian profinite group must be of finite exponent. Hence it is enough to deal with abelian groups of finite exponent. 
	\begin{prop}\label{torsion abelian groups}
		Let $G$ be an abelian group of exponent $n$. Let $n=\prod_{i} p_i^{t_i}$ be a factorization of $n$ as a product of prime powers. By \cite[Theorem 6]{kaplansky2018infinite}  
		$G$ is isomorphic to \[ \bigoplus_{i}\left (\bigoplus_{j_i=1}^{t_i}(\bigoplus_{\mm_{j_i}} C_{p_i^{j_i}}) \right )\]
		for some cardinals $\textbf m_{j_i}$.
		
		$G$ is of profinite type if and only if for every $j_i$, either $\mm_{j_i}$ is finite, or there exists some cardinal $\mathbf{n}_{j_i}$ such that $2^{\nn_{j_i}}=\mm_{j_i}$.
	\end{prop}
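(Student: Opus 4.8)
The plan is to classify, up to \emph{abstract} isomorphism, the underlying groups of profinite abelian groups of exponent $n$, and then compare with the (unique, by \cite[Theorem 6]{kaplansky2018infinite}) cyclic decomposition of $G$.

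\emph{Step 1 (structure of profinite abelian groups of exponent $n$).} A profinite abelian group $A$ of exponent $n$ is the direct product of its $p_i$-primary parts $A_{p_i}$ (a \emph{finite} product, since only the primes $p_i\mid n$ occur), and $A_{p_i}$ is a profinite abelian $p_i$-group of exponent dividing $p_i^{t_i}$. Its Pontryagin dual $A_{p_i}^{\vee}$ is a discrete abelian group of exponent dividing $p_i^{t_i}$, so by Pr\"ufer's theorem it is a direct sum of cyclic $p_i$-groups; dualizing back, $A$ is topologically isomorphic to
\[ \prod_i\ \prod_{j_i=1}^{t_i}\bigl(C_{p_i^{j_i}}\bigr)^{I_{j_i}} \]
for some index sets $I_{j_i}$, each factor carrying the product topology. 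So everything reduces to understanding the abstract group underlying a full Cartesian power $M:=\bigl(C_{p^j}\bigr)^I$.

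\emph{Step 2 (the technical heart).} If $I$ is finite, $M$ is just the finite group $\bigl(C_{p^j}\bigr)^{|I|}$. If $I$ is infinite, I claim $M\cong\bigoplus_{2^{|I|}}C_{p^j}$ as abstract groups. Since $M$ has bounded exponent $p^j$, Pr\"ufer's theorem gives $M\cong\bigoplus_{l=1}^{j}\bigl(\bigoplus_{\alpha_l}C_{p^l}\bigr)$, where the cardinals $\alpha_l$ are the Ulm invariants $\alpha_l=\dim_{\F_p}\bigl((p^{l-1}M\cap M[p])/(p^{l}M\cap M[p])\bigr)$. Working coordinatewise one checks that $p^{l-1}M\cap M[p]=\prod_I p^{j-1}\Z/p^j\Z\cong\F_p^{I}$ for every $1\le l\le j$ (it is the socle of each cyclic factor $p^{l-1}\Z/p^j\Z$), while $p^{j}M=0$; hence $\alpha_l=0$ for $l<j$ and $\alpha_j=\dim_{\F_p}\F_p^{I}=2^{|I|}$ (the dimension of an infinite-dimensional $\F_p$-space equals its cardinality). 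Alternatively one may argue that $M$ is flat over $\Z/p^j\Z$, hence projective ($\Z/p^j\Z$ being a perfect ring), hence free of rank $|M|=2^{|I|}$. Conversely, for any infinite cardinal $\nn$, taking $|I|=\nn$ realizes $\bigoplus_{2^{\nn}}C_{p^j}$ as the abstract group underlying the profinite group $\bigl(C_{p^j}\bigr)^{I}$.

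\emph{Step 3 (assembling the equivalence).} If $G$ is of profinite type with profinite topology $\tau$, then $(G,\tau)$ is profinite abelian of exponent $n$, so by Steps 1--2 its underlying group is $\bigoplus_i\bigoplus_{j_i=1}^{t_i}\bigl(\bigoplus_{\kappa_{j_i}}C_{p_i^{j_i}}\bigr)$ with $\kappa_{j_i}$ equal to $|I_{j_i}|$ (finite case) or $2^{|I_{j_i}|}$ (infinite case); by the uniqueness part of \cite[Theorem 6]{kaplansky2018infinite} we get $\mm_{j_i}=\kappa_{j_i}$, so each $\mm_{j_i}$ is finite or of the form $2^{\nn_{j_i}}$. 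Conversely, given that each $\mm_{j_i}$ is finite or equals $2^{\nn_{j_i}}$, set $H$ to be the (finite) direct product over all $(i,j_i)$ of the finite discrete group $\bigl(C_{p_i^{j_i}}\bigr)^{\mm_{j_i}}$ when $\mm_{j_i}$ is finite, and of the profinite group $\bigl(C_{p_i^{j_i}}\bigr)^{I_{j_i}}$ with $|I_{j_i}|=\nn_{j_i}$ (then infinite) otherwise; by Step 2, $H$ is a profinite abelian group abstractly isomorphic to $G$, and transporting its topology along such an isomorphism equips $G$ with a profinite topology. The main obstacle is Step 2 --- pinning down that the only nonzero Ulm invariant of an infinite Cartesian power $\bigl(C_{p^j}\bigr)^I$ is $\alpha_j=2^{|I|}$, which is precisely what forces infinite $\mm_{j_i}$ to be powers $2^{\nn}$ while leaving all finite values admissible; the remainder is bookkeeping with Pr\"ufer's structure and uniqueness theorems and with Pontryagin duality.
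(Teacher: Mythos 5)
Your proposal is correct, but your forward direction takes a genuinely different route from the paper's. You first classify all profinite abelian groups of exponent $n$ topologically (Sylow decomposition plus Pontryagin duality) as finite products of Cartesian powers $(C_{p_i^{j_i}})^{I_{j_i}}$, then compute the Ulm invariants of such a power to identify its underlying abstract group as $\bigoplus_{2^{|I|}}C_{p^{j}}$ when $I$ is infinite, and finally appeal to the uniqueness of the multiplicities in the Pr\"ufer decomposition to force each $\mm_{j_i}$ to be finite or of the form $2^{\nn}$. The paper never invokes duality or the full structure theorem: it stays inside the given profinite topology, observing that $H_i=G[p_i^{t_i}]$, $L=p^{k-1}H_i$ and the quotient $L[p]/(L[p]\cap pL)\cong\bigoplus_{\mm_k}C_{p_i}$ are again of profinite type (being built from closed subgroups and continuous images), and then needs only the classification of exponent-$p$ profinite groups as powers of $C_p$ together with the count $|C_p^{\kk}|=2^{\kk}$; in effect it computes the $k$-th Ulm invariant internally rather than quoting its isomorphism-invariance. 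Your converse direction is essentially the paper's: both realize $\bigoplus_{2^{\nn}}C_{p^{j}}$ as the abstract group underlying $\prod_{\nn}C_{p^{j}}$ (you via the Ulm computation or via flat-implies-free over the local Artinian ring $\Z/p^{j}\Z$, the paper via freeness of the product as a $\Z/p^{j}\Z$-module) and then take a finite product and transport the topology. What your route buys is the complete topological description of every profinite topology on such a $G$ as a byproduct, at the cost of a heavier toolkit (duality plus the uniqueness theorem for bounded abelian groups); the paper's argument is more self-contained. One minor point: the uniqueness you attribute to Kaplansky's Theorem 6 is really the separate standard fact that the number of cyclic summands of each order is an isomorphism invariant (which your own Ulm computation exhibits), so cite that invariance result rather than Theorem 6 itself; this is a citation quibble, not a gap.
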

	\begin{rem}
		Assuming the Generalised Continuum Hypothesis, Proposition \ref{torsion abelian groups} can be rephrased as: For every $i_j$, $\mm_{i_j}$ is either zero or a successor cardinal.
	\end{rem}
	\begin{proof}
		$\Leftarrow$ First notice that if $\{A_i\}_{i\in I}$ are groups of profinite type then so is $\prod_I A_i$. So, it is sufficient  to show  that if $\mm_{j_i}$ is either finite or equal to $2^{\nn_{j_i}}$ then $\bigoplus_{\mm_{j_i}} C_{p^{j_i}}$ is of profinite type.  Indeed, for $\mm_{j_i}$ finite the claim is trivial. Otherwise, consider $\prod_{\nn_{j_i}} C_{p^i}$, which is a profinite group. One easily checks that it is a free module over $\mathbb{Z}/p^{\nn_{j_i}}$ and thus is isomorphic to $\bigoplus_I C_{p^{\nn_{j_i}}}$. For cardinality reasons, $|I|=2^{\nn_{j_i}}=\mm_{j_i}$.
		
		$\Rightarrow$ For an integer $m \in \mathbb N$ we denote by $G[m]$ the subgroup of elements of $G$ of order dividing $m$. If $G$ is an abelian profinite group then $G[m]$ is a closed subgroup of $G$.
		Now assume that $G$ has finite exponent $n=\prod_i {p_i}^{t_i}$. Each group $H_i:=G[p_i^{t_i}]= \bigoplus_{j_i=1}^{t_i}(\bigoplus_{\mm_{j_i}} C_{p_i^{j_i}})$ also has profinite type.  Let $1\leq k\leq t_i$ and let $L=p^{k-1} H_i= \bigoplus_{j=k}^{t_i} (\bigoplus_{\mm_{j}} C_{p_i^{j-k+1}})$ which also has profinite type. Therefore $L[p]/ (L[p] \cap pL) \cong \bigoplus_{\mm_k} C_{p_i}$ is of profinite type. The only abelian profinite groups of exponent $p_i$ are the direct products of copies of $C_{p_i}$ (see \cite[Theorem 4.3.8]{ribes2000profinite}), so again for cardinality reasons, $\mm_k$ must be either finite or $2^{\nn_k}$ for some cardinal $\nn_k$. 
	\end{proof}
	Now we can give a criteria for a torsion free abelian group to be of profinite type, considering the torsion case is already known. First we need few lemmas.
	\begin{lem}\label{free and lifting basis}
		Let $G$ be a torsion free abelian group. Then for every prime number $p$ and natural number $n$, $G/p^nG$ is free module over $\mathbb{Z}/p^n\mathbb{Z}$. Moreover, if $\{g+p^nG\}_{g\in I}$ is a basis of $G/p^nG$ as a free module over $\mathbb{Z}/p^n\mathbb{Z}$ then $\{g+p^{n+1}G\}_{g\in I}$ is a basis of $G/p^{n+1}G$ as a free module over $\mathbb{Z}/p^{n+1}\mathbb{Z}$.
	\end{lem}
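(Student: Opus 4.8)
The plan is to reduce both assertions to a single statement: for any family $(g_i)_{i\in I}$ of elements of $G$ and any fixed $n\ge 1$, the reductions $(g_i+p^nG)_{i\in I}$ form a $\Z/p^n\Z$-basis of $G/p^nG$ \emph{if and only if} the reductions $(g_i+pG)_{i\in I}$ form an $\F_p$-basis of $G/pG$. Granting this equivalence, the first assertion follows by taking $(g_i)_{i\in I}$ to be any lift to $G$ of an $\F_p$-basis of $G/pG$; and the ``moreover'' part follows because, if $(g+p^nG)_{g\in I}$ is a $\Z/p^n\Z$-basis of $G/p^nG$, then $(g+pG)_{g\in I}$ is an $\F_p$-basis of $G/pG$ by the equivalence applied with exponent $n$, hence $(g+p^{n+1}G)_{g\in I}$ is a $\Z/p^{n+1}\Z$-basis of $G/p^{n+1}G$ by the equivalence applied with exponent $n+1$. (The case $n=0$ is uninteresting and is ignored.)

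For the direction ``$\F_p$-basis of $G/pG$ $\Rightarrow$ $\Z/p^n\Z$-basis of $G/p^nG$'', I would treat generation and independence separately. Generation: from $G=\langle g_i:i\in I\rangle_{\Z}+pG$ one multiplies by $p$ and substitutes repeatedly (using $p\langle g_i\rangle_\Z\subseteq\langle g_i\rangle_\Z$) to obtain $G=\langle g_i\rangle_{\Z}+p^nG$, so the $g_i+p^nG$ generate $G/p^nG$ over $\Z/p^n\Z$. Independence: suppose $\sum_i a_ig_i\in p^nG$ with $a_i\in\Z$, almost all zero, but not all $a_i$ divisible by $p^n$. Let $m<n$ be the least $p$-adic valuation occurring among the (finitely many) nonzero $a_i$, attained at some index $i_0$, and write $a_i=p^mb_i$, so that $p\nmid b_{i_0}$. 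Then $p^m\sum_ib_ig_i\in p^nG=p^m(p^{n-m}G)$, and since $G$ is torsion free multiplication by $p^m$ is injective on $G$, so $\sum_ib_ig_i\in p^{n-m}G\subseteq pG$; reducing modulo $p$ contradicts the $\F_p$-independence of $(g_i+pG)$. Hence $p^n\mid a_i$ for all $i$, which is precisely $\Z/p^n\Z$-independence.

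For the converse direction, the quickest route is to apply $-\otimes_{\Z/p^n\Z}\Z/p\Z$ to the isomorphism $G/p^nG\cong(\Z/p^n\Z)^{(I)}$ sending $g_i+p^nG$ to the $i$-th standard basis vector: since $(G/p^nG)\otimes_{\Z/p^n\Z}\Z/p\Z=(G/p^nG)/p(G/p^nG)=G/pG$, this yields $G/pG\cong(\Z/p\Z)^{(I)}$ with $g_i+pG$ corresponding to the $i$-th standard basis vector. (Alternatively, multiplication by $p^{n-1}$ induces an isomorphism $G/pG\cong p^{n-1}G/p^nG$ carrying $g_i+pG$ to $p^{n-1}g_i+p^nG$, and the latter family is $\F_p$-independent inside $G/p^nG$ precisely because the $g_i+p^nG$ are $\Z/p^n\Z$-independent.)

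I expect the only genuine obstacle to be the linear-independence step in the first direction: one must organise the $p$-adic valuation bookkeeping correctly and, crucially, invoke torsion-freeness at the exact point where a factor $p^m$ is cancelled — this is the unique place the hypothesis is used, and the statement fails without it. The generation step and the descent back to $G/pG$ are routine once the reformulation above is in place.
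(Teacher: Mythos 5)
Your proposal is correct. It differs in organisation from the paper's argument: the paper proceeds by induction on $n$, lifting a basis of $G/p^nG$ to one of $G/p^{n+1}G$ one step at a time, and works entirely inside the finite-exponent quotient $G/p^{n+1}G$ — there torsion-freeness of $G$ is used to prove a root-extraction claim (an element of $G/p^{n+1}G$ killed by $p^k$ lies in $p^{n+1-k}(G/p^{n+1}G)$), generation comes from a Frattini-subgroup argument, and independence from combining the induction hypothesis with that root claim. You instead prove, directly and without induction, the two-way equivalence ``$\mathbb{F}_p$-basis of $G/pG$ $\Leftrightarrow$ $\Z/p^n\Z$-basis of $G/p^nG$'', handling independence by a valuation argument carried out in $G$ itself, where torsion-freeness enters exactly once to cancel the factor $p^m$; the converse direction is a formal base-change $(G/p^nG)\otimes_{\Z/p^n\Z}\Z/p\Z\cong G/pG$ needing no hypothesis on $G$. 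The two proofs share the same essential ingredients (a Nakayama/Frattini-type generation step and a divide-by-$p$-powers independence step), but your reformulation yields both assertions of the lemma from one clean statement and avoids the paper's inductive bookkeeping inside $G/p^{n+1}G$, at the mild cost of introducing the tensor-product (or multiplication-by-$p^{n-1}$) argument for the descent back to $G/pG$, which the paper never needs because its induction only ever goes upward.
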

	\begin{proof}
		We prove the first claim by induction on $n$. For $n=1$ $G/pG$ is vector space over $\mathbb{F}_p$ so the claim is trivial. Now assume that $G/p^nG$ is a free module over $\mathbb{Z}/p^n\mathbb{Z}$, and for every $x\in G/p^{n+1}G$ write $\bar{x}$ for the image of $x$ in the quotient $G/p^nG$. We want to show that if $G/p^nG\cong \bigoplus_I\langle \bar{x_i}\rangle$ then $G/p^{n+1}G\cong \bigoplus_I\langle x_i\rangle$.
		
		First we show that if $y\in G/p^{n+1}G$ satisfies $p^ky=0$ for some $k<p^{n+1}$ then $y$ has a $p^{n+1-k}$'th root. Indeed, express $y=g+p^{n+1}G$ for some $g\in G$. Then $p^kg\in p^{n+1}G$ meaning that there exists $g'\in G$ such that $p^kg=p^{n+1}g'$. In particular $p^k(g-p^{n+1-k}g')=0$. However, $G$ is torsion free, hence $g=p^{n+1-k}g'$ and in particular $g+p^{n+1}G=p^{n+1-k}(g'+p^{n+1}G)$. 
		
		Now, assume that $G/p^nG\cong \bigoplus_I\langle \bar{x_i}\rangle$. Since $pG/p^{n+1}G$ is the Frattini subgroup of $G/p^{n+1}G$ we have that $\{x_i\}_I$ is a generating set of $G/p^{n+1}G$.
		
		It remains to show that $\{x_i\}_I$ is an independent set over $\mathbb{Z}/p^{n+1}\mathbb{Z}$. Let $J\subseteq I$ be a finite subset and assume that $\sum_J\alpha_ix_i=0$ for some integers $\alpha_i\ne0\mod p^{n+1}$ for all $i\in J$. Since $\{\bar{x_i}\}_J$ are independent over $\mathbb{Z}/p^n\mathbb{Z}_p$ then $p^n| \alpha_i$ for all $i\in J$. Put $\alpha_i=p^n\beta_i$ for $(\beta_i,p)=1$. We conclude that $\sum_J\beta_ix_i$ is annihilated by $p^n$, but that means that $\sum_J\beta_ix_i\in pG/p^{n+1}G$. Since the set $\{x_i\}$ is spanning, there is some finite subset $J'\subseteq I$ such that $\sum_J\beta_ix_i=p\sum_{J'}\gamma_ix_i$. Dividing by $p^nG$ again, for every $i\in J\cap J'$ $\beta_i+p\gamma_i$ must be divided by $p^n$ and for every $i\in J\setminus J'$ $\beta_i$ must be divided by $p^n$, a contradiction. 
	\end{proof}
	\begin{prop}\label{torsion free abelian groups of profinite type}
		Let $G$ be an abstract torsion free abelian group. Then $G$ is of profinite type if and only if the natural maps $G\to G/nG$ induce an isomorphism $G\to \invlim G/nG$ where $n$ runs over the naturals with the division relation, and for every $n$, $G/nG$ is of profinite type.
	\end{prop}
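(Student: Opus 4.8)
The plan is to prove the two implications separately; the forward one is soft (pure compactness), and the converse is the real content.

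\emph{Forward direction.} Suppose $G$ admits a profinite topology. Then for each $n$ multiplication by $n$ is a continuous endomorphism of the compact Hausdorff group $G$, so $nG$ is compact, hence closed, and $G/nG$ with the quotient topology is profinite; in particular each $G/nG$ is of profinite type. To see that the natural map $\varphi\colon G\to\invlim_n G/nG$ is an isomorphism: it is injective because $\ker\varphi=\bigcap_n nG$ lies inside every open subgroup $U$ (if $[G:U]=m$ then $mG\subseteq U$) and the open subgroups of a profinite group intersect trivially; it is surjective because, given a compatible family $(x_n+nG)_n$, the closed cosets $x_n+nG$ have the finite intersection property (finitely many of them all contain $x_N+NG$ for $N$ a common multiple), so compactness of $G$ produces a point $x\in\bigcap_n(x_n+nG)$, and then $\varphi(x)=(x_n+nG)_n$.

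\emph{Converse direction.} Suppose $G\cong\invlim_n G/nG$ with each $G/nG$ of profinite type. Because $G$ is torsion free the $\Z/n\Z$-modules $G/nG$ decompose via the Chinese Remainder Theorem, so the inverse system over $(\mathbb N,\mid)$ splits as a product over primes and $G\cong\prod_p M_p$ with $M_p:=\invlim_k G/p^kG$; it then suffices to put a profinite topology on each $M_p$. Fix $p$ and, by Lemma~\ref{free and lifting basis}, choose $I_p\subseteq G$ whose image in $G/pG$ is an $\F_p$-basis; iterating the ``moreover'' part of that lemma, the image of $I_p$ in $G/p^kG$ is a $\Z/p^k\Z$-basis for every $k$, the transition maps become coordinatewise reductions, and hence $M_p\cong\invlim_k(\Z/p^k\Z)^{(I_p)}$, an inverse system determined up to isomorphism by $\kappa_p:=|I_p|=\dim_{\F_p}(G/pG)$. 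Now $G/pG\cong\F_p^{(\kappa_p)}$ is of profinite type, so Proposition~\ref{torsion abelian groups} forces $\kappa_p$ to be finite or of the form $2^{\lambda_p}$. If $\kappa_p$ is finite then $M_p\cong\Z_p^{\kappa_p}$, which is profinite. If $\kappa_p=2^{\lambda_p}$ (necessarily with $\lambda_p$ infinite) I would apply the same construction to the \emph{profinite} torsion free group $\prod_{\lambda_p}\Z_p=\invlim_k\prod_{\lambda_p}\Z/p^k\Z$: since $\dim_{\F_p}\prod_{\lambda_p}\F_p=\bigl|\prod_{\lambda_p}\F_p\bigr|=2^{\lambda_p}=\kappa_p$, Lemma~\ref{free and lifting basis} exhibits $\prod_{\lambda_p}\Z_p$ as $\invlim_k(\Z/p^k\Z)^{(J)}$ with $|J|=\kappa_p$, whence $M_p\cong\prod_{\lambda_p}\Z_p$ is profinite. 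Either way $G\cong\prod_p M_p$ is a product of profinite groups, hence profinite, so $G$ is of profinite type.

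\emph{Main obstacle.} The delicate point is the case in which $G/pG$ is infinite dimensional: it is not enough to know the abstract groups $G/p^kG$ in isolation, one must control the entire inverse \emph{system} $(G/p^kG)_k$ and recognise it as the system attached to an honest profinite group. This is exactly where Lemma~\ref{free and lifting basis} is indispensable — it furnishes bases of all the quotients $G/p^kG$ that are compatible with the transition maps — and it is what legitimises the identification $\invlim_k(\Z/p^k\Z)^{(\kappa_p)}\cong\prod_{\lambda_p}\Z_p$ when $\kappa_p=2^{\lambda_p}$, together with the cardinal identity $\dim_{\F_p}\prod_{\lambda}\F_p=2^{\lambda}$. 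Everything else (compactness in the forward direction, the Chinese Remainder splitting) is routine.
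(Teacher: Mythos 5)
Your proof is correct and follows essentially the same route as the paper: after the prime-by-prime (CRT) splitting, both arguments use Lemma~\ref{free and lifting basis} to get bases of the quotients $G/p^kG$ compatible with the reduction maps, invoke Proposition~\ref{torsion abelian groups} to force the rank $\kappa_p$ to be finite or of the form $2^{\lambda_p}$, and then identify the tower with that of $\prod_{\lambda_p}\Z_p$ (the paper phrases this as a recursive construction of compatible isomorphisms $\varphi_k$, which is the same computation). Your spelled-out forward direction is also fine; the paper simply calls it trivial.
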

	\begin{proof}
		The first direction is trivial.
		
		Now assume that every $G/nG$ admits a profinite topology, we shall construct a profinite topology on $\invlim G/nG$. By \cite[Theorem 1]{kaplansky2018infinite} for every $n$, $G/nG\cong \bigoplus_I G/{p^{k_{p,n}}}G$ where $I$ is the set of all prime numbers $p$ that divide $n$ and for all $p\in I$, $k_{p,n}$ is the maximal power of $p$ such that $p^{k_{p,n}}$ divides $n$. By Lemma \ref{free and lifting basis}, for every prime $p$ there exists some cardinal $\mm_p$ such that $G/p^{k_{p,n}}G\cong \bigoplus_{\mm_p}C_{p^{k_{p,n}}}$, for every $n$. By Proposition \ref{torsion abelian groups}, since $G/nG$ is of profinite type, for every $p$ $\mm_p$ is either finite of the form $2^{\nn_p}$ for some cardinal number $\nn_p$. It is enough to construct profinite topologies on $G/p^kG$ for every $p$ and $k$ such that the natural epimorphisms $G/p^{k+1}G\to G/{p^k}G$ are continuous. Then taking the product topologies will lead to compatible topologies on $\{G/nG\}_{n\in \mathbb{N}}$. Recall that constructing a topology on $G/p^kG$ is equivalent to choosing an isomorphism $G/p^kG\to \prod_{\kk_p} C_{p^k}$, where $\kk_{p}=\mm_{p}$ in case $\mm_p$ is finite, and $\kk_p=\nn_p$ otherwise. Since the maps $\prod_{\kk_p} C_{p^{k+1}}\to \prod_{\kk_p} C_{p^k}$ that are defined by $x\to x\mod p^k$ are continuous with respect to the product topology for every natural number $k$, it is enough to choose isomorphisms $\varphi_k:G/p^kG\to \prod_{\kk_p} C_{p^k} $ which makes the following diagram commutative:
		\[
		\xymatrix@R=14pt{ G/p^{k+1}G\ \ar[r]^{\varphi_{k+1}}\ \ar[d]_{x\to x\bmod p^k}& \prod_{\kk_p} C_{p^{k+1}}\ \ar[d]^{x\to x\mod p^k}\\
			G/p^kG \ar[r]_{\varphi_k} & \prod_{\kk_p} C_{p^k}\\
		}
		\]
		We build such maps by recursion. Assume $\varphi_k$ is already defined and choose $\{x_i\}_{\mm_p}$ to be a basis of $G/p^kG$ as a free module over $\mathbb{Z}/p^k\mathbb{Z}$. Then $\{\varphi(x_i)\}_{\mm_p}$ is a basis for $\prod_{\kk_p}C_{p^k} $. By Lemma \ref{free and lifting basis} these bases can be lifted to bases $\{y_i\}_{\mm_p}$ and $\{z_i\}_{\mm_p}$ of $G/p^{k+1}G$ and $\prod_{\kk_p}C_{p^{k+1}} $ correspondingly. Define an isomorphism $G/p^{k+1}G\to \prod_{\kk_p}C_{p^{k+1}}$ by sending $y_i$ to $z_i$ for every $i\in \mm_p$.
		
	\end{proof}
	Proposition \ref{torsion free abelian groups of profinite type} cannot be extended to general abelian groups, as can be shown in the following example:
	\begin{exam}
		Let $p$ be a prime. 
		For a positive integer $n$ and a set $Y$ we define \[ L_n(Y):= \bigoplus_{y \in Y} \frac{\Z}{p^n \Z} y \] to be the free module over $\Z /p^n \Z$ with basis $Y$.
		
		Let $\{X_i\}_{i=1} ^ \infty$ be a sequence of disjoint sets $X_i$ each of cardinality $2^{\aleph_0}$.
		
		For $n \in \mathbb N$ define 
		\[ G_n= \left( \oplus_{i=1}^{n-1} L_i(X_i) \right ) \oplus L_n (\cup_{j \geq n} X_j)  \]
		
		In particular $G_1= L_1( \cup_{i=1}^\infty X_i)$ is $\mathbb F_p$ vector space with basis $\cup_{i=1}^\infty X_i$.
		Note that $G_n= G_{n+1}/p^nG_{n+1}$ and let $f_{n,n+1}: G_{n+1} \rightarrow G_n$ be the quotient map $g \mapsto g+p^nG_{n+1}$ ($g \in G_{n+1}$).
		
		Let $G= \underleftarrow{\lim} \ G_n$ be the inverse limit of the groups $G_n$ with the homomorphisms $f_{n,n+1}$ above.
		Note that each $G_n$ is a group of profinite type, in fact $G_n$ is isomorphic to the profinite group $\oplus_{i=1}^n (\Z/p^i\Z)^{\aleph_0}$. We prove below that $G$ is not of profinite type.
		
		First we note that $G/p^n G \simeq G_n$. In particular we may identity $G/pG$ with $G_1=L_1(\cup_{i=1}^\infty X_i)$.
		
		For each $k$ let $h_k : G/pG \rightarrow p^kG/p^{k+1}G$ be defined by $h(g +pG)= p^kg +p^{k+1}G$ for all $g \in G$.
		Let $W_k = \ker h_k \subset G/pG=G_1$. By computing $h_k$ in $G/p^{k+1}G \simeq G_{k+1}$ we have $W_k= \sum_{i=1}^k \mathbb F_p X_i \subset G_1$.
		
		Suppose that $G$ is of profinite type. Then each subgroup $W_k< G_1$ is closed with respect to the induced profinite topology on $G_1=G/pG$. In particular since $|G_1:W_k|=\infty$ the group $W_k$ is nowhere dense in $G_1$. However $G_1= \cup_{k=1}^\infty W_k$ which contradicts the Baire Category theorem. Therefore $G$ is not of profinite type.
		
	\end{exam}
	\begin{cor}
		The class of abstract (abelian) groups of profinite type is not closed under inverse limits.
	\end{cor}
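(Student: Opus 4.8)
The plan is to obtain the corollary directly from the preceding Example, which is precisely a counterexample to closure under inverse limits. There the groups $G_n$ form an inverse system over $\mathbb N$ (ordered by divisibility of $p$-powers, equivalently by $\le$) with transition maps $f_{n,n+1}\colon g\mapsto g+p^nG_{n+1}$; each $G_n$ is of profinite type, being isomorphic to the profinite group $\oplus_{i=1}^n(\Z/p^i\Z)^{\aleph_0}$, yet $G=\invlim G_n$ was shown not to be of profinite type. So I would simply cite the Example. As a bonus remark I would note that all the $f_{n,n+1}$ are surjective, so the failure of closure already occurs within inverse systems having surjective bonding maps, and that the witnessing groups can be taken abelian, which justifies the parenthetical in the statement.

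If one wants the proof spelled out in place rather than by reference, I would reassemble the three ingredients the Example supplies, in this order. First, each $G_n$ admits a profinite topology. Second, $G/p^nG\cong G_n$; in particular $G/pG\cong G_1$ is the $\F_p$-vector space freely spanned by $\bigcup_{i=1}^\infty X_i$, and hence is uncountable. Third, for each $k$ the map $h_k\colon G/pG\to p^kG/p^{k+1}G$ induced by $x\mapsto p^kx$ has kernel $W_k=\sum_{i=1}^k\F_p X_i$, a subspace of infinite codimension in $G_1$. Now if $G$ were of profinite type, then $p^kG$ and $p^{k+1}G$, being images of the compact group $G$ under continuous self-maps, would be closed, $h_k$ would be a continuous homomorphism of profinite groups, so each $W_k$ would be closed of infinite index in $G_1$, hence nowhere dense; but $G_1=\bigcup_{k\ge1}W_k$ because every element of a direct sum has finite support, contradicting the Baire Category Theorem for the compact group $G_1$.

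There is no real obstacle here: the entire content lives in the Example, and the corollary is a formal consequence of it. The only point requiring a word of care — already flagged above — is the justification that the $W_k$ are closed in the induced topology, which rests on the continuity of $h_k$ and on $p^jG$ being closed in a profinite group; everything else is bookkeeping about the inverse system.
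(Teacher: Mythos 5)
Your proposal is correct and follows the paper exactly: the corollary is a direct consequence of the preceding Example, and your spelled-out version (closedness of $p^kG$, continuity of $h_k$, the kernels $W_k$ being closed of infinite index, and the Baire category contradiction from $G_1=\bigcup_k W_k$) reproduces the Example's own argument. No gaps.
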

	\begin{prop}
		Let $G$ be a profinite abelian group which is either torsion ot torsion free, and let $U\leq G$ be a subgroup of finite index. Then $U$ is of profinite type.
	\end{prop}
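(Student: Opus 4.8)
The plan is to reduce both cases to one invariance principle for finite‑exponent abelian groups, and, in the torsion‑free case, to feed this into the criterion of Proposition~\ref{torsion free abelian groups of profinite type}. First I would establish the following principle: if $A,B$ are abelian groups of finite exponent and there is a homomorphism $f\colon A\to B$ with finite kernel and finite cokernel, then $A$ is of profinite type if and only if $B$ is. The point is that by Proposition~\ref{torsion abelian groups} this property is detected by cardinal invariants: on its $p$‑primary component $A_{(p)}$ one writes $A_{(p)}\cong\bigoplus_j(\bigoplus_{\mm_j}C_{p^j})$ with $\mm_j=\dim_{\F_p}\bigl((p^{j-1}A\cap A[p])/(p^jA\cap A[p])\bigr)$, exactly as in the proof of Proposition~\ref{torsion abelian groups}, the condition being that each $\mm_j$ is finite or of the form $2^{\nn_j}$. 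The map $f$ restricts to a homomorphism $A_{(p)}\to B_{(p)}$ of $p$‑primary components, again with finite kernel and cokernel, so one may assume $A,B$ are bounded abelian $p$‑groups; then a direct diagram chase shows that for each $j$ the map $f$ induces a homomorphism between the $\F_p$‑subquotients above whose kernel and cokernel are finite, so the invariant $\mm_j$ of $A$ and that of $B$ differ only by a finite cardinal. Since a cardinal that is finite or of the form $2^{\nn}$ stays in that class after adding or deleting a finite cardinal, Proposition~\ref{torsion abelian groups} gives the equivalence.

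For the torsion case this finishes the proof: a torsion profinite abelian group has finite exponent by \cite[Lemma~4.3.7]{ribes2000profinite}, hence so does $U$, and applying the principle to the inclusion $U\hookrightarrow G$ (kernel $0$, cokernel $G/U$ finite), together with the fact that the profinite group $G$ is of profinite type, shows $U$ is of profinite type.

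For the torsion‑free case I would verify the two conditions of Proposition~\ref{torsion free abelian groups of profinite type} for $U$. Put $d=|G:U|$, so $dG\subseteq U$. Since $G$ is profinite, hence of profinite type, Proposition~\ref{torsion free abelian groups of profinite type} gives $\bigcap_n nG=0$, hence $\bigcap_n nU=0$, so $U\to\invlim U/nU$ is injective. For surjectivity I would use the exact sequences $0\to(nG\cap U)/nU\to U/nU\to U/(nG\cap U)\to 0$. Torsion‑freeness makes $nx\mapsto x+U$ a well‑defined isomorphism $(nG\cap U)/nU\cong(G/U)[n]$, under which the bonding maps of the inverse system $\{(nG\cap U)/nU\}_n$ become multiplication maps on the finite group $G/U$; these are eventually zero, so $\invlim (nG\cap U)/nU=0$. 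On the other hand $U/(nG\cap U)\cong(U+nG)/nG\le G/nG$, and, working inside $\invlim G/nG=G$, one gets $\invlim (U+nG)/nG=\bigcap_n(U+nG)\subseteq U+dG=U$, hence it equals $U$. Passing to inverse limits in the exact sequences therefore yields $U\cong\invlim U/nU$. Finally, for each $n$ the composite $U/nU\twoheadrightarrow(U+nG)/nG\hookrightarrow G/nG$ has finite kernel and finite cokernel, and $G/nG$ is of profinite type by Proposition~\ref{torsion free abelian groups of profinite type} applied to $G$; hence the principle of the first step shows $U/nU$ is of profinite type. Both conditions hold, so $U$ is of profinite type. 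The step I expect to be the main obstacle is the bookkeeping inside the invariance principle — verifying that intersecting with $p^jA$ and with $A[p]$ turns a finite‑kernel, finite‑cokernel homomorphism into one of the same kind — together with, more routinely, the inverse‑limit computation just described; neither is conceptually deep, but both require care.
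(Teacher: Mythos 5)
Your proposal is correct, but it follows a genuinely different route from the paper. The paper first reduces to pro-$p$ groups via the Sylow decomposition and then to index $p$; its torsion case is structural (an element $x\in G\setminus U$ of minimal order, purity of $\langle px\rangle$ via Kaplansky's criterion, giving explicit splittings $U=\langle px\rangle\times U'$, $G=\langle x\rangle\times U'$ and an explicit profinite topology on $U$), and its torsion-free case constructs a topology on $G/pG$ in which $U/pG$ is closed and lifts it, through the recursive construction in the proof of Proposition~\ref{torsion free abelian groups of profinite type}, to a topology on $G$ making $U$ open. You instead avoid both reductions and the purity argument: your torsion case rests on a commensurability-invariance principle for the cardinal invariants of Proposition~\ref{torsion abelian groups}, and your torsion-free case verifies the criterion of Proposition~\ref{torsion free abelian groups of profinite type} directly for $U$, via the computation $(nG\cap U)/nU\cong (G/U)[n]$ (whose inverse limit vanishes since the bonding maps are multiplication by $m/n$ and $G/U$ is finite) and $\bigcap_n(U+nG)=U+dG=U$; the left-exactness of $\invlim$ together with the fact that the natural map $U\to\invlim (U+nG)/nG$ is already an isomorphism then forces $U\cong\invlim U/nU$. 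The bookkeeping you flag as the main obstacle does go through: factor $f$ as the quotient by its finite kernel followed by a finite-index inclusion; for the quotient, a finite subgroup of a bounded $p$-group lies in a finite direct summand, so the invariants change finitely; for a finite-index inclusion $A'\le B$, the kernel of the induced map on subquotients embeds in $(p^jB\cap A')/p^jA'$, a subquotient of the finite group $p^jB/p^jA'$, while for the cokernel one uses $p^{j-1}B\cap B[p]=p^{j-1}\bigl(B[p^j]\bigr)$ and the finiteness of the index of $A'[p^j]$ in $B[p^j]$. What the two arguments buy is different: the paper's proof is explicit and exhibits concrete topologies (in the torsion-free case, one on $G$ in which $U$ is open), whereas yours is softer and more uniform, and your invariance principle (two finite-exponent abelian groups linked by a homomorphism with finite kernel and cokernel are of profinite type simultaneously) is a reusable statement of independent interest.
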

	\begin{proof}
		First we show that it is enough to prove the claim for pro-$p$ groups. Indeed, recall that every profinite abelian group is isomorphic to the the direct product of its $p$-Sylow subgroups (see, for example, \cite[Proposition 2.3.8]{ribes2000profinite}). Therefore $G\cong \prod_pG_p$ where $G_p$ is a pro-$p$ abelian group, and $p$ runs over the set of all prime numbers. Denote $n=[G:U]$. Then $nG\leq U$.  For every $p$ prime to $n$, $nG_p=G_p$. Denote $A=\prod_{p|n}G_p, B=\prod_{(p,n)=1}G_p$. Then $\{0\}\times B\leq U$. Hence, $U\cong (U\cap A)\times B$. The group $B$ is already profinite, so it is enough to prove that $U\cap A$ is of profinite type. Replacing $G$ by $A$ and $U$ by $U\cap A$ we may assume that $G=\bigoplus_{i=1}^kG_{p_i}$. Thus $nG_p=\bigoplus_{i=1}^knG_{p_i}\leq U$. Since $G/nG$ is a torsion abelian group whose $p$-torsion parts are precisely $G_p/nG_p$, $U/nG\cong \bigoplus_{i=1}^k(U\cap G_{p_i})nG/nG$. However, $nG\cong \bigoplus_{i=1}^k(nG\cap G_{p_i})$ while for every $i$, $nG\cap G_{p_i}=nG_{p_i}\leq U$, so we are done.
		
		From now on we assume that $G$ is a pro-$p$ group. It is enough to consider the case $[G:U]=p$.
		
		First case: Assume that $G$ is a torsion pro-$p$ group and $[G:U]=p$. Let $x\in G\setminus U$ be of minimal order. Denote $y=px\in U$. We claim that $\langle y \rangle$ is a direct summand of $U$. By \cite[Theorem 5]{kaplansky2018infinite}, (since $U/\langle y\rangle $ is a direct sum of cyclic groups, being an abelian group of finite exponent), it is enough to prove that $\langle y \rangle$ is a \textit{pure} subgroup. By \cite[Lemma 7]{kaplansky2018infinite} this is satisfied in case the elements of order $p$ in $\langle y\rangle$ have roots of the same maximal $p$-th power in $U$ as in $\langle y\rangle$. Let $o(x)=p^k$. The elements of order $p$ in $\langle y \rangle$ are precisely $p^{r-1}mx$ for integers $m$ coprime to $p$. The highest $p$-th power root of these elements in $p^{r-2}$. Assume $p^{r-1}mx=p^{r-1}u$ for some $u\in U$, then $mx-u\in G\setminus U$ has order strictly less then the order of $x$, a contradiction.
		
		Express $U=\langle px\rangle  \times U'$ for some $U'$, we get that $G= \langle x\rangle \times U'$. Being finite, $\langle x\rangle$ is a closed subgroup in $G$, hence the quotient topology on $U'$ from its isomorphism with $G/ \langle x \rangle$ is profinite. Finally, we define a profinite topology on $U$ to be the product topology of the profinite topology on $U'$ and the discrete topology on the finite subgroup $\langle px\rangle$. 
		
		Second case: Assume that $G$ is torsion free. Consider $G/pG$. Since $pG\leq U$, $[G/pG:U/pG]=p$. By the torsion case, we can define a profinite topology on $G/pG$ for which $U/pG$ is closed. By the proof of Proposition \ref{torsion free abelian groups of profinite type}, this topology can be lifted to a profinite topology $\tau$ on $G$. Observe that $U$ is closed in $\tau$ being the inverse image of $U/pG$. 
	\end{proof}
	In Corollary \ref{abelain extension of group of profinite type} we will see that the converse holds in general: If $G$ is an abelian group which possesses a finite-index subgroup of profinite type, then $G$ is of profinite type. However in general an extension of a finite abelian group by a profinite abelian group might not be of profinite type, as the following example shows:
	\begin{exam}
		Let $W$ be the two dimensional vector space over $\F_p$ with basis $e_1$ and $e_2$ and define an automorphism $f$ to act on $W$ by $f(e_1)=e_1, f(e_2)=e_1+e_2$.
		Note that $f$ has order $p$ and the centralizer of $f$ in $W$ is the span of $e_1$.
		
		Now let $H_0$ be the direct sum $W_1 \oplus W_2$ where $W_1$ is a vector space over $\F_p$ of dimension $2^{\mathbb{N}}$  on which $f$ acts as the identity, and $W_2$ is a direct sum of countably many copies of $W$ with the action of $f$ on $W$ given above. 
		Since $H_0$ has dimension $2^{\mathbb{N}}$ as a vector space over $\F_p$, we have by Proposition \ref{torsion abelian groups} that $H_0$ is of profinite type.
		We claim that the semidirect product $G$ of $H_0$ with the cyclic group $\langle f\rangle=C_p$ with the action by conjugation described above is not of profinite type. Suppose it is. The centralizer $C_G(f)$ of $f$ in $G$ is generated by $f$ and the centralizer $C_{H_0}(f)$ of $f$ in $H_0$ which is a subspace of countably infinite codimension in $H_0$. Hence $C_G(f)$ is a closed subgroup of countably infinite index in $G$. But the quotient $G/G_G(f)$ is then a countably infinite profinite group, a contradiction.
		
	\end{exam}
	\section{Profinite rigidity}
	The easiest example of a profinite group $G$ which is not profinitely rigid is $G=C_p^\mm$  with the product topology $\tau$ for an infinite cardinal $\mm$. Indeed, given any non-open subgroup $L$ of index $p$ of $G$ there is an abstract automorphism $f \in \mathrm{Aut}(G)$ such that $f(L)$ is an open subgroup of $G$
	and so $G$ admits a different profinite topology $f^{-1}(\tau)$. 
	However, assuming the Generalised Continuum Hypothesis the cardinal $\mm$ is determined from $2^\mm$ and thus all profinite topologies on $G$ are isomorphic as profinite groups, i.e. $G$ is weakly rigid.
	
	\begin{prop} The map $\mm \mapsto 2^\mm$ is injective on cardinals if and only if all torsion abelian profinite groups are weakly rigid.
	\end{prop}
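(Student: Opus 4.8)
The plan is to rely on the structure theory of torsion abelian profinite groups. Every such group $G$ has finite exponent $n=\prod_i p_i^{t_i}$, its algebraic primary decomposition $G=\bigoplus_i G_{p_i}$ is also a topological direct product of its closed Sylow subgroups, and each Sylow subgroup is topologically isomorphic to $\prod_{j=1}^{t_i} C_{p_i^j}^{\kappa_{ij}}$ for cardinals $\kappa_{ij}$ that are uniquely determined and are invariants of $G$ as a topological group (the profinite analogue of Pr\"ufer's theorem, cf.\ \cite[Theorem 4.3.8]{ribes2000profinite}). Following the proof of Proposition \ref{torsion abelian groups}, the underlying abstract group of $G$ is $\bigoplus_i\bigoplus_{j=1}^{t_i}\bigoplus_{\mm_{ij}}C_{p_i^j}$, where $\mm_{ij}=\kappa_{ij}$ if $\kappa_{ij}$ is finite and $\mm_{ij}=2^{\kappa_{ij}}$ if $\kappa_{ij}$ is infinite; by the uniqueness part of \cite[Theorem 6]{kaplansky2018infinite} the $\mm_{ij}$ are abstract isomorphism invariants.

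For the direction ``$\mm\mapsto 2^\mm$ injective $\Rightarrow$ weak rigidity'' I would fix a torsion abelian profinite group and two profinite topologies $\tau_1,\tau_2$ on its underlying abstract group $G$. Since being torsion, being abelian and the exponent are algebraic notions, $(G,\tau_1)$ and $(G,\tau_2)$ are torsion abelian profinite of the same exponent $n$, with structure cardinals $\kappa_{ij}^{(1)}$ and $\kappa_{ij}^{(2)}$ respectively. Both topologies must reproduce the same abstract invariants $\mm_{ij}$: if $\mm_{ij}$ is finite then $\kappa_{ij}^{(1)}=\mm_{ij}=\kappa_{ij}^{(2)}$; if $\mm_{ij}$ is infinite then both $\kappa_{ij}^{(s)}$ are infinite and $2^{\kappa_{ij}^{(1)}}=\mm_{ij}=2^{\kappa_{ij}^{(2)}}$, whence injectivity forces $\kappa_{ij}^{(1)}=\kappa_{ij}^{(2)}$. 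So all structure cardinals coincide and the structure theorem yields a topological isomorphism $(G,\tau_1)\cong(G,\tau_2)$; thus $G$ is weakly rigid.

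For the converse I would argue contrapositively. If $\mm\mapsto 2^\mm$ is not injective, choose $\mm_1\neq\mm_2$ with $2^{\mm_1}=2^{\mm_2}=:\lambda$; both $\mm_1,\mm_2$ are then necessarily infinite, and so is $\lambda$. Consider the profinite group $C_p^{\mm_1}$ with the product topology: its underlying abstract group is an $\F_p$-vector space of cardinality $\lambda$, hence of dimension $\lambda$, so it is also abstractly isomorphic to $C_p^{\mm_2}$ with the product topology. These two profinite topologies are not topologically isomorphic, since for infinite $\mm$ the cardinal $\mm$ is recovered from the topological group $C_p^{\mm}$ as the cardinality of $\operatorname{Hom}_{\mathrm{cont}}(C_p^{\mm},C_p)\cong\bigoplus_\mm C_p$ (every continuous homomorphism factors through a finite subproduct), and $\mm_1\neq\mm_2$. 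Hence $C_p^{\mm_1}$ is a torsion abelian profinite group that is not weakly rigid.

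The main obstacle I anticipate is the bookkeeping around the structure theorem: making precise that the Sylow decomposition is forced by the algebra (so that it is simultaneously topological and intrinsic), and that the $\kappa_{ij}$ are genuine topological invariants. One also needs the observation, implicit in the proof of Proposition \ref{torsion abelian groups}, that $\prod_\kappa C_{p^j}$ is a free $\Z/p^j\Z$-module of rank $2^\kappa$ for $\kappa$ infinite --- this is what pins down the relation $\mm_{ij}=2^{\kappa_{ij}}$ and hence lets the injectivity hypothesis do its work.
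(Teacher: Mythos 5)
Your proposal is correct and follows essentially the same route as the paper: the forward implication via the structure theorem for torsion abelian profinite groups together with the fact (from the proof of Proposition \ref{torsion abelian groups}) that the abstract invariants are the cardinals $2^{\kappa_{ij}}$, and the converse via abstractly isomorphic but topologically distinct powers of a cyclic group of prime order. The only cosmetic differences are that you distinguish $C_p^{\mm_1}$ from $C_p^{\mm_2}$ by counting continuous homomorphisms to $C_p$ where the paper counts open subgroups, and you invoke Kaplansky's uniqueness theorem directly rather than the subquotient argument.
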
   
	\begin{proof}
		If $\mm_1$ and $\mm_2$ are two different cardinals such that $2^{\mm_1}=2^{\mm_2}$ then the groups $G_1=(C_2)^{\mm_1}$ and $G_2=(C_2)^{\mm_2}$ are abstractly isomorphic being vector spaces over $\mathbb F_2$ with the same cardinality. On the other hand $G_i$ has $\mm_i$ open subgroups and $\mm_1 \not =\mm_2$, hence $G_1$ is not weakly rigid.
		
		Suppose now that the cardinal $\mm$ is uniquely determined by $2^\mm$. Let $n=p_1^{t_1} \cdots p_k^{t_k}$ be a positive integer with factorisation into prime powers $p_i^{t_i}$. Any torsion abelian profinite group $G$ of exponent $n$ is isomorphic to \[ \prod_{i=1}^k \left (\prod_{j=1}^{t_i} (C_{p_i^j})^{\mm_{i,j}} \right ) \]
		for some cardinals $\mm_{i,j}$. In the course of the proof of Proposition \ref{torsion abelian groups} we showed that the cardinals $2^{\mm_{i,j}}$ are uniquely determined from the structure of $G$ as an abstract group. Therefore the cardinals $\mm_{i,j}$ are uniquely determined and hence $G$ is weakly rigid. 
		
	\end{proof}
	
	We next give a necessary and sufficient condition for a profinite topology of a finite index normal subgroup to induce a profinite topology on the whole group.
	\begin{lem}\label{going up}
		Let $G$ be an abstract group and $U\unlhd_f G$ a finite index normal subgroup such that $U$ is of profinite type. Then the topology on $U$ can be extended to a profinite topology on $G$ such that $U\unlhd_o G$ iff for all $g\in G$ and $H\leq _oU$, $gHg^{-1}\leq_o U$.
	\end{lem}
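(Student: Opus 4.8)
The plan is to prove the two implications separately; the forward direction is essentially a one-line observation and all the work is in the converse.

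\textbf{($\Rightarrow$).} Suppose the given profinite topology on $U$ is the subspace topology coming from a profinite topology on $G$ in which $U$ is open. For each $g\in G$ the inner automorphism $x\mapsto gxg^{-1}$ is a homeomorphism of $G$. If $H\leq_o U$ then, since $U\leq_o G$, also $H\leq_o G$, so $gHg^{-1}$ is an open subgroup of $G$; and $gHg^{-1}\subseteq gUg^{-1}=U$, so $gHg^{-1}$ is an open subgroup of $U$. This gives the stated condition.

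\textbf{($\Leftarrow$).} Assume $gHg^{-1}\leq_o U$ for all $g\in G$ and all $H\leq_o U$. Let $\mathcal N$ be the set of subgroups $N\leq U$ that are open and normal in $U$ and normal in $G$. I would first show that $\mathcal N$ is cofinal among the open normal subgroups of $U$: given such a $V$, it has finite index in $U$ and hence in $G$, so its $G$-normal core $N_V=\bigcap_{g\in G}gVg^{-1}$ is a finite intersection; each $gVg^{-1}$ is normal in $gUg^{-1}=U$ and, by hypothesis, open in $U$, so $N_V$ is open and normal in $U$, normal in $G$, and contained in $V$; thus $N_V\in\mathcal N$. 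Since $U$ is profinite and Hausdorff, the open normal subgroups of $U$ meet in $1$, so by cofinality $\bigcap_{N\in\mathcal N}N=1$; and $\mathcal N$ is clearly closed under finite intersections. Hence taking $\{gN:g\in G,\ N\in\mathcal N\}$ as a basis defines a Hausdorff group topology $\tau$ on $G$ in which every $N\in\mathcal N$ is open of finite index (note $[G:N]=[G:U][U:N]<\infty$), $U$ is open (a finite union of cosets of any $N\in\mathcal N$), and, again by cofinality, the $\tau$-subspace topology on $U$ is exactly the original one.

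It then remains — and this is the heart of the argument — to verify that $\tau$ is actually profinite, equivalently that the natural continuous homomorphism $\iota\colon G\to\invlim_{N\in\mathcal N}G/N$ is a topological isomorphism. Injectivity is $\bigcap\mathcal N=1$. For surjectivity I would use that cofinality of $\mathcal N$ gives $\invlim_{N\in\mathcal N}U/N=U$: given a thread $(x_NN)_N$, its images in the constant system $\{G/U\}$ agree, so $x_NU=gU$ for a fixed $g\in G$ and all $N$; then $(g^{-1}x_NN)_N$ is a thread in $\invlim U/N=U$, hence equals $uN$ for a single $u\in U$, so $\iota(gu)=(x_NN)_N$. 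Finally $\iota$ is open because each $N$ is $\tau$-open, so $\iota$ is a topological isomorphism and $\tau$ is profinite, completing the proof. The main obstacle to anticipate is precisely this last step: a group with a neighbourhood basis of $1$ consisting of finite-index normal subgroups with trivial intersection need not be compact (e.g.\ $\Z$ with its finite-index topology), so completeness must be checked honestly, and it is exactly the conjugation hypothesis that produces enough $G$-normal open subgroups inside $U$ for $\invlim G/N$ to recover $G$ rather than a proper completion.
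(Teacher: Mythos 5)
Your proof is correct, but the converse direction takes a genuinely different route from the paper. The paper works directly with \emph{all} open subgroups $H\leq_o U$ as a neighbourhood basis of the identity in $G$: the conjugation hypothesis is used exactly once, to verify continuity of multiplication (given $g_1,g_2$ and $H\leq_o U$, take $H_1=g_2Hg_2^{-1}$, $H_2=H$), and compactness is then dispatched in one line --- $G$ is a finite union of cosets of $U$, each homeomorphic to the compact space $U$, so $(G,\tau)$ is compact, Hausdorff and totally disconnected, hence profinite. You instead pass to the cofinal family $\mathcal N$ of $G$-normal open subgroups of $U$ (this is where you use the conjugation hypothesis, via the finite normal core $N_V=\bigcap_{gU\in G/U} gVg^{-1}$), which makes continuity of the group operations automatic, and then you prove compactness honestly by showing $\iota\colon G\to\invlim_{N\in\mathcal N}G/N$ is a topological isomorphism, with surjectivity reduced via $U\cong\invlim_{N\in\mathcal N}U/N$ and the constancy of the $G/U$-component of a thread. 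Both arguments are complete; the paper's buys brevity in the compactness step, while yours buys a cleaner topology-definition (normal-in-$G$ basic subgroups, so no hand-verification of continuity) and makes explicit the realization of $(G,\tau)$ as an inverse limit of its finite quotients, at the cost of the longer surjectivity argument --- and your closing remark correctly identifies why that completeness check cannot be skipped.
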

	\begin{proof}
		$\Rightarrow$ If $G$ is a profinite group and $U\unlhd_o G$ then every open subgroup of $U$ is open in $G$. By a basic exercise in topology, a compact Hausdorff topology cannot be strictly included in any other compact  topology. Thus, the topology that $G$ induces on $U$ must be equal to the original topology in $U$. Hence, by continuity of the multiplication, $gHg^{-1}\leq _oU$  for all $g\in G$ and $H\leq _oU$.
		
		$\Leftarrow$ Let $G$ and $U$ be as in the proposition. We define a topology on $G$ by letting the open subgroups of $U$ serve as a set of neighborhoods of the identity. As the basic open groups are cosets of subgroups, the inverse operation is continuous. We shall show that the product operation is continuous. Let $H\leq U$ be an open subgroup and $g_1,g_2\in G$, we shall find open subgroups $H_1,H_2$ such that $g_1H_1g_2H_2\leq g_1g_2H$. Take $H_1=g_2Hg_2^{-1}$ and $H_2=H$ will do. Observe that by assumption $g_2Hg_2^{-1}$ is indeed open. We get that $G$ is a topological group. As a finite union of profinite spaces, $G$ is profinite.  
	\end{proof}
	\begin{cor}\label{abelain extension of group of profinite type}
		Any abelian group possessing a finite-index subgroup of profinite type is of profinite type itself. 
	\end{cor}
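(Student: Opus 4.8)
The plan is to derive this immediately from Lemma \ref{going up}. Let $G$ be an abelian group and let $U \leq G$ be a subgroup of finite index which is of profinite type. Because $G$ is abelian, $U$ is automatically normal, so $U \unlhd_f G$, and we may fix one profinite topology on $U$ witnessing that $U$ is of profinite type.

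Next I would verify the conjugation hypothesis of Lemma \ref{going up}. For any $g \in G$ and any open subgroup $H \leq_o U$, commutativity of $G$ gives $gHg^{-1} = H$, which is of course still open in $U$. Hence the condition ``$gHg^{-1} \leq_o U$ for all $g \in G$ and $H \leq_o U$'' is satisfied trivially.

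Applying Lemma \ref{going up}, the chosen profinite topology on $U$ extends to a profinite topology on $G$ in which $U$ is open; in particular $G$ carries a profinite group topology, i.e. $G$ is of profinite type. There is essentially no obstacle here: the only content is the observation that in the abelian case the conjugation condition of Lemma \ref{going up} is automatic, so the lemma applies with no further work.
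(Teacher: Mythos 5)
Your proof is correct and is exactly the argument the paper intends: the corollary is stated as an immediate consequence of Lemma \ref{going up}, since in an abelian group every finite-index subgroup is normal and conjugation acts trivially, so the hypothesis $gHg^{-1}\leq_o U$ holds automatically. Nothing further is needed.
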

	Recall that a profinite group $G$ is called \textit{profinitely rigid} if it admits a unique profinite topology. In particular, every abstract automorphism of $G$ is in fact continuous. A basic class of examples is the class of finite groups with the discrete topology. Another elementary class of examples is the class of \textit{strongly complete} profinite groups, and in particular, finitely generated profinite groups (see \cite{SegalAndNikolov} for the full proof). Recall that a profinite group is called \textit{strongly complete} if every subgroup of finite index is open. As the finite-index topology on it is compact-Hausdorff, any properly weaker topology can not be Hausdorff. Another class of examples was presented by Kiehlmann in his paper \cite{kiehlmann2013classifications}, by the following theorem:
	\begin{thm}
		Let $G\cong \prod _I A_i$ be a direct product of finite groups all having trivial center. Then $G$ is profinitely rigid.
	\end{thm}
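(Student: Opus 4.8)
The plan is to prove that every profinite group topology on the abstract group $G$ coincides with the product topology $\pi$ arising from a fixed isomorphism $G\cong\prod_{i\in I}A_i$; since this forces any two profinite topologies on $G$ to be equal, $G$ is profinitely rigid. For $i\in I$ write $A_i^{\ast}\le G$ for the $i$-th coordinate subgroup (the copy of $A_i$ inside $G$) and $\widehat{A_i}=\{(a_j)_j\in G : a_i=1\}$ for the "everything but the $i$-th coordinate'' subgroup, so that $[G:\widehat{A_i}]=|A_i|<\infty$. The key algebraic observation is that the trivial-center hypothesis yields $\widehat{A_i}=C_G(A_i^{\ast})$: an element $(a_j)_j$ commutes with all of $A_i^{\ast}$ if and only if $a_i\in Z(A_i)=1$, i.e. $a_i=1$. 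In particular $\widehat{A_i}$ is defined purely in terms of the abstract group structure of $G$ and does not refer to any topology.

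Now let $\tau$ be an arbitrary profinite topology on $G$. In any Hausdorff topological group the centralizer of a subset is closed (it is an intersection of closed sets), so each $\widehat{A_i}=C_G(A_i^{\ast})$ is $\tau$-closed; being of finite index it is therefore $\tau$-open (its complement is a finite union of closed cosets). Consequently every finite intersection $N_F:=\bigcap_{i\in F}\widehat{A_i}=\{(a_j)_j\in G : a_j=1 \text{ for all } j\in F\}$, for $F\subseteq I$ finite, is $\tau$-open. But $\{N_F : F\subseteq I\ \text{finite}\}$ is precisely a neighbourhood basis of the identity for the product topology $\pi$, and translations are $\tau$-continuous, so every $\pi$-open set is $\tau$-open; that is, $\pi\subseteq\tau$.

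To upgrade this inclusion to equality I would invoke the standard fact that a continuous bijection from a compact space to a Hausdorff space is a homeomorphism. Indeed the identity map $(G,\tau)\to(G,\pi)$ is continuous because $\pi\subseteq\tau$, the source $(G,\tau)$ is compact and the target $(G,\pi)$ is Hausdorff, so it is a homeomorphism and $\tau=\pi$. As $\tau$ was an arbitrary profinite topology on $G$, we conclude that $G$ admits a unique profinite topology.

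The only real content is the first paragraph: recognising that "everything but the $i$-th coordinate'' is a centralizer, hence a topology-independent subgroup, which is closed in every profinite topology and, having finite index, automatically open. This is exactly where the assumption $Z(A_i)=1$ enters, and it is genuinely needed, since for abelian factors, e.g. $(\Z/p\Z)^{\mathbb N}$, the conclusion fails. Everything afterwards is soft point-set topology. One should only take care that the $A_i$ are finite, so that each $\widehat{A_i}$ has finite index and the implication "closed $\Rightarrow$ open'' applies; no hypothesis on the cardinality of $I$ is required.
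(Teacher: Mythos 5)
Your proof is correct, and every step holds up: $C_G(A_i^{\ast})$ is exactly the subgroup of elements trivial in the $i$-th coordinate precisely because $Z(A_i)=1$; centralizers of arbitrary subsets are closed in any Hausdorff group topology; a closed subgroup of finite index is open; the subgroups $\bigcap_{i\in F}C_G(A_i^{\ast})$ ($F$ finite) form a base at the identity for the product topology, so the product topology is coarser than any profinite topology $\tau$; and compactness of $\tau$ together with Hausdorffness of the product topology forces equality. Be aware, however, that the paper itself gives no proof of this statement: it is quoted from Kiehlmann \cite{kiehlmann2013classifications} and used as a black box (e.g.\ in Proposition \ref{ex1} and as the base case of Theorem \ref{ss}), so there is no in-paper argument to compare with line by line. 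That said, your route is very much the mechanism the authors use when they generalize the result: in Lemma \ref{*} and Theorem \ref{ss} they exploit that the finite factors $S_i$ and their normalizers $N_G(S_i)$ are closed in \emph{any} profinite topology, and in Lemma \ref{going up} they invoke the same ``a compact topology cannot strictly refine a Hausdorff one'' fact you use at the end. So what your write-up buys is a short, self-contained proof of the cited theorem by the centralizer-of-a-finite-subgroup argument that the paper only deploys one level up; it also makes transparent why the centerless hypothesis is indispensable (for abelian factors $C_G(A_i^{\ast})=G$ and the argument collapses, consistent with the non-rigidity of $C_p^{\mathbb{N}}$ discussed at the start of Section 3).
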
 In particular, Kiehlmann's Theorem can be applied to all semi-simple profinite groups. The above discussion is the core of the following surprising result:
	\begin{prop} \label{ex1}
		There exists a nonstrongly complete profinite group $S$ whose finite-index normal subgroups are either open, and thus profinite, or are not of profinite type. In fact we can take $G=S^I$, where $S$ is a nonabelian finite simple group and $I$ is an infinite set.
	\end{prop}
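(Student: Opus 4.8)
The plan is to take $G=S^I$ with its product (profinite) topology $\tau_0$ and to establish two facts: that $G$ is not strongly complete, and that no non-open finite-index normal subgroup of $G$ is of profinite type; the proposition then follows by applying the second fact to any witness of the first. For non-strong-completeness I would fix a non-principal ultrafilter $\mathcal U$ on $I$ and consider the homomorphism $G=S^I\to\prod_{\mathcal U}S$ onto the ultrapower. As $S$ is finite, every class of $\prod_{\mathcal U}S$ has a constant representative, so $\prod_{\mathcal U}S\cong S$ and the kernel $N_{\mathcal U}=\{(s_i)_i:\{i:s_i=e\}\in\mathcal U\}$ is normal of index $|S|$. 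It is not $\tau_0$-open: every open subgroup of $G$ contains $\prod_{i\notin F}S_i$ for some finite $F$, but a tuple of this subgroup that is $\neq e$ at every coordinate outside $F$ has its set of trivial coordinates equal to the finite set $F$, which is not in $\mathcal U$, so it lies outside $N_{\mathcal U}$.

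Now let $N\unlhd G$ be of finite index and not $\tau_0$-open, and suppose for contradiction that $N$ carries a profinite group topology $\tau$. I would first note that $S_i\le N$ for all but finitely many $i$: each $\pi_i(N)$ is normal in the simple group $S_i$, hence trivial or all of $S_i$; $N$ cannot be contained in $\prod_{i\notin B}S_i$ for an infinite $B$ (that has infinite index in $G$), so $\pi_i(N)=S_i$ for cofinitely many $i$, and for such $i$ one has $S_i=[N,S_i]\le N$. The crucial step is that for every $i$ the coordinate-vanishing subgroup $K_i:=\{n\in N:n_i=e\}$ is $\tau$-closed: either $S_i\le N$, in which case $K_i=C_N(S_i)$ because $Z(S)=1$, a centralizer of a subset of $N$, which is closed in any Hausdorff group topology; or $\pi_i(N)=1$, in which case $K_i=N$. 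Consequently, for each finite $F\subseteq I$ the subgroup $M_F:=\bigcap_{i\in F}K_i=\{n\in N:n_i=e\ \forall i\in F\}$ is $\tau$-closed, and it has finite index, being the kernel of $N\to\prod_{i\in F}S_i$; so $M_F$ is $\tau$-open.

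To conclude I would observe that the cosets of the $M_F$, $F$ finite, form a base for the subspace topology $\tau_0|_N$ on $N$, since a basic $\tau_0$-open subset of $G$ is a coset of some $\prod_{i\notin F}S_i$ and meets $N$ in a coset of $M_F$ or in the empty set. Hence $\tau$ refines $\tau_0|_N$, so the inclusion $(N,\tau)\hookrightarrow(G,\tau_0)$ is a continuous homomorphism; as $(N,\tau)$ is compact and $(G,\tau_0)$ is Hausdorff, the image $N$ is $\tau_0$-closed in $G$. But a finite-index closed subgroup is open, contradicting the choice of $N$; therefore $N$ is not of profinite type. Taking $N=N_{\mathcal U}$ exhibits both properties in $G=S^I$.

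The step I expect to carry the real weight is the closedness of each $K_i$ in an \emph{arbitrary} profinite topology on $N$: this is exactly where the hypothesis that $S$ is nonabelian simple enters, since $Z(S)=1$ makes $K_i$ a centralizer, and it is precisely the point that fails for abelian $S$, where $K_i$ is only the kernel of a possibly discontinuous character — indeed over an abelian $G$ a non-open finite-index subgroup can be of profinite type. The remaining ingredients form a routine compactness argument; in particular Kiehlmann's rigidity of $S^I$ is not used in the proof, serving rather as the backdrop that makes the statement surprising.
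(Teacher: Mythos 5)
Your proof is correct, but it follows a genuinely different route from the paper's. The paper obtains the dichotomy by combining Lemma \ref{lss} (every finite quotient of $G=S^I$ is semisimple, proved via a group-law argument) with Kiehlmann's theorem used twice --- first on a finite-index normal subgroup $U$ of profinite type, which is semisimple being an inverse limit of its finite images, so that conjugation by elements of $G$ is continuous and Lemma \ref{going up} yields a profinite topology on $G$ in which $U$ is open; second on $G$ itself, whose rigidity forces that topology to coincide with the product topology --- together with a citation of \cite[Example 4.2.12]{ribes2000profinite} for non-strong completeness. You bypass all of this: for a non-open finite-index normal $N$ carrying a profinite topology $\tau$, you use $\pi_i(N)\in\{1,S_i\}$, $[N,S_i]=S_i$ and $Z(S)=1$ to identify each coordinate kernel $K_i$ with either $N$ or the centralizer $C_N(S_i)$ of a subset of $N$, hence $\tau$-closed, hence the finite-index subgroups $M_F$ are $\tau$-open; continuity of the inclusion $(N,\tau)\to(G,\tau_0)$ follows, so $N$ is compact, hence $\tau_0$-closed, hence open --- a contradiction. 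Your ultrafilter kernel $N_{\mathcal U}$ replaces the citation for non-strong completeness. What you gain is a short, self-contained and elementary argument that needs neither Kiehlmann's rigidity, Lemma \ref{lss} nor Lemma \ref{going up}, and that pinpoints exactly where nonabelian simplicity enters (the centralizer description of $K_i$, which indeed fails for abelian $S$); what the paper's route gains is that it presents the proposition as an application of the semisimple-rigidity machinery developed around Theorem \ref{ss}, reusing Lemma \ref{going up} and Lemma \ref{lss} rather than a bespoke coordinate computation.
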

	
	The proof relies on the following.
	\begin{lem} \label{lss} Let $S$ be a nonabelian finite simple group and let $G=S^I$ for a set $I$. Then any finite quotient of $G$ is semisimple.
		
	\end{lem}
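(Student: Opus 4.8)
The plan is to reduce the statement to the structure of normal subgroups of a direct power of a simple group, and then to recognise the finite quotients via ultrafilters.

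First I would establish the following description of the normal subgroups of $G=S^I$. For $g=(g_i)\in G$ put $\operatorname{supp}(g)=\{i:g_i\ne 1\}$ and, for $A\subseteq I$, let $S^A=\{g\in G:\operatorname{supp}(g)\subseteq A\}$. The claim is that every normal subgroup $N\trianglelefteq G$ equals $N_{\mathcal I}:=\{g\in G:\operatorname{supp}(g)\in\mathcal I\}$ for the ideal $\mathcal I=\{A\subseteq I:S^A\subseteq N\}$ on $I$. That $\mathcal I$ is an ideal (closure under subsets and under finite unions, the latter because $S^{A\cup B}=\langle S^A,S^B\rangle$) and that $N_{\mathcal I}\subseteq N$ are formal; the real content is $N\subseteq N_{\mathcal I}$, i.e. that $g\in N$ forces $S^{\operatorname{supp}(g)}\subseteq N$. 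Here I would use that, since $S$ is finite and simple, there is an integer $m=m(S)$ with $(h^S)^m=S$ for every $h\in S\setminus\{1\}$ (the subsets $(h^S)^{\ell}$ increase to the normal closure $\langle h^S\rangle=S$ and stabilise because $S$ is finite; one then takes $m$ uniform over the finitely many conjugacy classes). Given $g\in N$ with $A=\operatorname{supp}(g)$ and an arbitrary $x\in S^A$, one writes each coordinate $x_i$ as a product of $m$ conjugates of $g_i$, collects the conjugators into elements $y^{(1)},\dots,y^{(m)}\in G$, and obtains $x=\prod_{l}g^{y^{(l)}}\in\langle g^{G}\rangle\subseteq N$; hence $S^A\subseteq N$, which gives $N=N_{\mathcal I}$.

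Next I would translate this into reduced products. Let $\mathcal F$ be the filter dual to $\mathcal I$. Then $gh^{-1}\in N$ iff $\{i:g_i=h_i\}\in\mathcal F$, so $G/N$ is precisely the reduced power $\prod_{\mathcal F}S$. Since $S$ is finite, $\prod_{\mathcal U}S\cong S$ for every ultrafilter $\mathcal U$ on $I$ (the constant map is an isomorphism), and since a subset of $I$ lies in $\mathcal F$ exactly when it lies in every ultrafilter extending $\mathcal F$, the diagonal map realises $\prod_{\mathcal F}S$ as a subdirect product of the family $\{\prod_{\mathcal U}S\cong S:\mathcal U\supseteq\mathcal F\}$ of copies of $S$.

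Finally I would invoke finiteness of $Q:=G/N$: a finite subdirect product of copies of a group is already separated by finitely many of its projections (there are only finitely many pairs of distinct elements to separate), so $Q$ embeds subdirectly into $S^n$ for some $n$; and a short Goursat-style induction on $n$ shows that a subdirect product of finitely many copies of a finite nonabelian simple group $S$ is isomorphic to a direct power $S^{m'}$, because at each step the last coordinate subgroup is a normal subgroup of the simple group $S$ and is therefore either a direct factor or redundant. Hence $Q\cong S^{m'}$ is semisimple. The only genuinely nontrivial step is the inclusion $N\subseteq N_{\mathcal I}$ in the first paragraph (and, inside it, the uniform covering fact for conjugacy classes of $S$); everything after that is soft reasoning about filters, reduced products, and Goursat's lemma.
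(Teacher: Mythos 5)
Your proof is correct, but it follows a genuinely different route from the paper's. The paper argues via group laws: a finite quotient $D$ of $G=S^I$ satisfies every law of $S$, hence is a homomorphic image of the finite relatively free group $Q=F_n/\bigcap_{h\colon F_n\to S}\ker h$, which is a subdirect product of subgroups $T_h\leq S$; the factors with $T_h$ a proper subgroup of $S$ are then discarded using the observation that $D$ has no proper quotient of order less than $|S|$ (since $S$ is the only finite simple image of $G$), so $D$ is an image of a subdirect product of finitely many copies of $S$ and is therefore semisimple. You instead prove the stronger structural statement that every normal subgroup of $S^I$ is of the form $N_{\mathcal I}$ for an ideal $\mathcal I$ on $I$ (this hinges on the uniform covering number $m$ with $(h^S)^m=S$ for all $h\neq 1$, which is exactly where the hypothesis that $S$ is a fixed finite nonabelian simple group enters), identify the quotients as reduced powers $\prod_{\mathcal F}S$, and finish with ultrafilters and a Goursat-type induction to get $Q\cong S^{m'}$. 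Your approach buys more: the full normal subgroup lattice of $S^I$ and the precise list of finite quotients (the finite powers of $S$), rather than just semisimplicity; the paper's approach is softer, avoids any classification, and uses only that $D$ lies in the variety generated by $S$. One small point to tighten: the sets $(h^S)^{\ell}$ are not nested in $\ell$ (since $1\notin h^S$), so "increase to the normal closure and stabilise" is not quite right as stated. The standard fix: with $k$ the order of $h$ one has $1\in (h^S)^k$, so the sets $(h^S)^{jk}$ do form an increasing chain stabilising to a nontrivial normal subset closed under multiplication, hence to a nontrivial normal subgroup, which must be $S$; then take a common multiple of the resulting exponents over the finitely many conjugacy classes to get the uniform $m$. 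With that repair your argument is complete.
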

	
	\begin{proof}
		Let $D$ be a finite homomorphic image of $G$ and observe that $D$ satisfies all group laws of $S$. Let $F=F_n$ be a free group of rank $n \in \mathbb N$ which has $D$ as a homomorphic image. Let $N= \cap_{h \in Z} \ker h$ where $Z$ is the set of all homomorphisms $h: F \rightarrow S$. The set $Z$ is finite  since $F$ is a finitely generated group and thus $Q=F/N$ is a finite subdirect product of $\prod_{h \in Z} T_h$ where each $T_h=h(Q) \leq S$. By construction $N$ is subgroup of all $n$-variable laws of $S$ and since $D$ satisfies these laws it follows that $D$ is a homomorphic image of $Q$, say $D=f(Q)$ for a surjective homomorphism $f: Q \rightarrow D$. 
		
		Suppose that $T_h$ is a proper subgroup of $S$ for some $h \in Z$ and let $Q'=Q \cap \ker h$. We have that $|Q:Q'|=|T_h|<|S|$. Hence $f(Q')$ is a normal subgroup of $f(Q)=D$ of index less than $|S|$. But $D$ cannot have a proper quotient of size less than $S$ since the only finite simple image of $G$ is $S$. In conclusion $f(Q')=D$ and in this way we can remove all factors $T_h$ which are proper subgroups of $S$ and deduce that $D=f(Q_0)$ where the subgroup $Q_0 \leq Q$ is a subdirect product of finitely many copies of $S$. But then both $Q_0$ and its quotient $D$ are semisimple by a standard argument.
	\end{proof}
	
	We can now prove Proposition \ref{ex1}
	\begin{proof} 
		Denote by $\tau$ the product topology on $G$, thus $(G,\tau)$ is a profinite group. By \cite[Example 4.2.12]{ribes2000profinite} $G$ is not strongly complete. Let $U$ be a normal subgroup of finite index in $G$. Assume that $U$ is of profinite type, we will prove that $U$ must be open in $G$. Every finite image of $U$ is a finite image of $G$ and hence semisimple by Lemma \ref{lss}. Being the inverse limit of finite semisimple groups, $U$ is therefore a semisimple profinite group. Hence by Kiehlmann's Theorem every automorphism of $U$ is continuous. Applying this to the conjugation action of $G$ on $U$ and using Lemma \ref{going up} we deduce that there is a profinite topology $\tau'$ on $G$ for which $U$ is open. Since $G$ is profinitely rigid, $\tau=\tau'$ and hence $U$ is open. 
	\end{proof}
	
	We extend the result of Kiehlmann to the larger class of profinite groups, which are the profinite groups of finite semisimple length. These are the profinite groups $G$ which admit a finite subnormal series of closed subgroups $1=G_n\unlhd \cdots \unlhd G_1\unlhd G_0=G$ such that all the quotients $G_i/G_{i+1}$ are semisimple. By  successively replacing each $G_i$ with the intersection of its $G$-conjugates we may assume that each $G_i$ is a closed normal subgroup of $G$.

	\begin{thm}\label{ss}
		Every profinite group of finite semisimple length is profinitely rigid.
	\end{thm}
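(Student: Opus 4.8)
\section*{Proof proposal}

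The plan is to induct on the semisimple length $n$. The base case $n\le 1$ is Kiehlmann's Theorem, since a semisimple profinite group is a Cartesian product of finite simple groups, each of which has trivial centre. For the inductive step fix a profinite group $G$ of semisimple length $n\ge 2$ with topology $\tau$, witnessed by a series $1=G_n\unlhd\cdots\unlhd G_0=G$ of closed $G$-normal subgroups with semisimple quotients, and let $\tau'$ be a second profinite topology on the abstract group $G$. I want $\tau=\tau'$; since both are compact Hausdorff it suffices to show that every $\tau$-open subgroup is $\tau'$-open.

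The central idea is to isolate a canonical normal subgroup $E=E(G)$, namely the topological closure of the subgroup generated by all \emph{components} of $G$ (a component being a subnormal subgroup isomorphic to a finite non-abelian simple group), and to note that although $E$ is defined using $\tau$, it admits a purely abstract description. First I would establish the structural facts, in analogy with finite group theory: $E$ is a semisimple closed normal subgroup with $C_G(E)=1$, and $G/E$ has semisimple length $\le n-1$ (each simple factor of $G_{n-1}$ is a component, so $G_{n-1}\le E$, and quotients of semisimple profinite groups are semisimple, so the series $1\unlhd E\unlhd EG_{n-2}\unlhd\cdots\unlhd G$ works). For $C_G(E)=1$ one argues that a nontrivial closed normal subgroup of a group of finite semisimple length contains a component (take the bottom nontrivial term of its induced series), and a component inside $C_G(E)$ would lie in $Z(E)=1$. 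The key observation is that for a single component $S$ one has $\{g:g\text{ normalises }S\text{ and induces an inner automorphism of }S\}=S\,C_G(S)$, a finite union of cosets of the centraliser of a finite set; hence $\widetilde E:=\bigcap_{S}S\,C_G(S)$, the intersection over all components, is closed in \emph{any} Hausdorff group topology on $G$, and is abstractly normal. Moreover $\widetilde E=E$: clearly $E\le\widetilde E$, and conversely given $g\in\widetilde E$ one picks $e\in E$ inducing the same automorphism of each component as $g$, whence $ge^{-1}\in\bigcap_S C_G(S)=C_G(E)=1$. Since subnormality and ``finite simple'' are abstract, $G$ has the same components for $\tau$ and $\tau'$, so applying this to $(G,\tau')$ shows $E$ is $\tau'$-closed (and $\tau'$-normal).

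Now the induction closes. On $E$ the topologies agree: $E\cong\prod_J S_j$ is semisimple, hence profinitely rigid by Kiehlmann, so $\tau'|_E=\tau|_E$. On $G/E$ they agree too: $G/E$ is an abstract group carrying the profinite topology $\tau/E$ of semisimple length $\le n-1$, hence is profinitely rigid by the inductive hypothesis, and $\tau'/E$ is a profinite topology on it (as $E$ is $\tau'$-closed and normal), so $\tau'/E=\tau/E$. It remains to glue. Let $V\unlhd G$ be $\tau$-open and put $W:=V\cap E$; then $W$ is $\tau|_E$-open, hence $\tau'|_E$-open, hence $\tau'$-closed in $G$, and also $\tau$-closed, normal, and of finite index in $E$. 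Passing to $G/W$ (a legitimate quotient for both topologies) we may assume $E$ is finite, centreless (a finite quotient of a semisimple group) and $V\cap E=1$. Then $[V,E]\le V\cap E=1$, so $V\le C_G(E)$; and $C_G(E)$ is open and closed for both topologies (finite index, centraliser of a finite set) with $C_G(E)\cap E=Z(E)=1$, so $C_G(E)$ maps homeomorphically onto $C_G(E)E/E\le G/E$ for each of $\tau,\tau'$. Transporting the $\tau$-openness of $V$ in $C_G(E)$ across this homeomorphism, using $\tau/E=\tau'/E$, and transporting back shows $V$ is $\tau'$-open in $C_G(E)$, hence in $G$. This proves $\tau\subseteq\tau'$, hence $\tau=\tau'$, completing the induction.

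I expect the main obstacle to be the structural part: proving, for profinite groups of finite semisimple length, the profinite analogues of $F^{*}(G)=E(G)$ and $C_G(E(G))=1$, and in particular the facts that closed subgroups and Hausdorff quotients of semisimple profinite groups are again semisimple (needed both to deduce $C_G(E)=1$ and to obtain the series for $G/E$), together with the identification $E=\bigcap_S S\,C_G(S)$ which makes $E$ topology-independent. The gluing step is elementary but delicate, and it is essential there that the subgroup factored out is semisimple, so that its finite quotients are centreless; this is precisely why $E(G)$, rather than an arbitrary term $G_i$ of the given series, is the right object to peel off.
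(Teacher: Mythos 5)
Your proposal is correct in outline and follows the same architecture as the paper's proof: induction on the semisimple length, identification of a maximal semisimple closed normal subgroup that can be recognised abstractly (hence is closed in any profinite topology), Kiehlmann's theorem at the bottom, the inductive hypothesis on the quotient, and a gluing step that exploits the trivial centre of the semisimple part (the paper reduces to $G=G_*\times U$ and observes $U=C_G(G_*)$; your reduction modulo $V\cap E$ and transport along $C\to C\bar E/\bar E$ is the same manoeuvre in different clothes). Where you genuinely diverge is in the key lemma making the socle topology-independent. The paper takes $G_*$ to be the largest closed normal semisimple subgroup and proves $G_*=\bigcap_i N_G(S_i)$, using that $\bigcap_i N_G(S_i)$ is topologically perfect (having finite semisimple length) together with solvability of $\mathrm{Out}(S_i)$, i.e.\ the Schreier conjecture. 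You instead work with the abstractly defined components and prove $E=\bigcap_S S\,C_G(S)$, which avoids any appeal to $\mathrm{Out}$ and to CFSG, at the price of developing a profinite component theory: that distinct components commute, that $E\cong\prod_j S_j$, and that $C_G(E)=1$. These sketched facts do go through: distinct components commute because their images in suitable finite continuous quotients are distinct components there (choose an open normal $N$ meeting $S\cup T$ trivially and separating them; then intersect over all smaller $N$); a nontrivial closed normal subgroup of $G$ meets the given series in a semisimple bottom term, whence $C_G(\langle\text{components}\rangle)=1$ (a component inside the centraliser would centralise itself); and then the conjugation map $E\to\prod_j\mathrm{Aut}(S_j)$ is a continuous injection with image $\prod_j\mathrm{Inn}(S_j)$, giving $E\cong\prod_j S_j$ and the existence of the element $e$ in your argument that $\widetilde E\subseteq E$. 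So the workload is comparable to the paper's Lemma on $G_*$, but your version is more self-contained and characterises the socle abstractly from the outset.

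One small logical slip to fix in the write-up: you cannot ``apply this to $(G,\tau')$'', since it is not known a priori that $(G,\tau')$ has finite semisimple length. Fortunately you do not need to: the set of components is an abstract invariant, each $S\,C_G(S)$ is closed in \emph{any} Hausdorff group topology, and the identity $E=\bigcap_S S\,C_G(S)$ was established using $\tau$ alone; this already shows $E$ is $\tau'$-closed. Also, in proving $C_G(E)=1$ argue directly that a component in $C_G(E)$ would centralise itself (rather than invoking $Z(E)=1$, which is most naturally deduced afterwards from $E\cong\prod_j S_j$), to avoid a circular-looking order of the structural steps.
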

	
	First we need a lemma. For a profinite group $G$ we denote by $G_*$ the largest closed normal semisimple subgroup of $G$. Note that $G_*$ exists since the closure of the product of a family of closed normal semisimple subgroups of G is also normal and semisimple.
	Moreover if $G$ has semisimple length $l$ then $G/G_*$ has semisimple length $l-1$.

	\begin{lem} \label{*} Let $G$ be a profinite group of finite semisimple length. Let $G_*= \prod_{i \in I} S_i$ where $I$ is nonempty and each $S_i$ is a finite nonabelian simple group. Then $G_*= \cap_{i \in I} N_G(S_i)$.
		
	\end{lem}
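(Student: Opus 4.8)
The plan is to put $K:=\bigcap_{i\in I}N_G(S_i)$ and prove $K=G_*$. Since each $S_i$ is finite, $N_G(S_i)$ is closed (a finite intersection of the closed sets $\{g:gsg^{-1}\in S_i\}$, $s\in S_i$), so $K$ is closed; and $K\unlhd G$ — indeed the coordinate subgroups $S_i$ are exactly the minimal closed normal subgroups of $G_*=\prod_i S_i$, so conjugation by $G$ permutes them and $K$ is the kernel of the resulting action $G\to\operatorname{Sym}(I)$. Also $G_*\subseteq K$, because $S_i\unlhd G_*$ for every $i$, so the work is the reverse inclusion. Before attacking it I would isolate the elementary fact $(\ast)$: every nontrivial closed normal subgroup of a Cartesian product $\prod_j T_j$ of finite nonabelian simple groups contains one of the factors $T_j$ (take $1\neq x\in N$, a coordinate $x_{j_0}\neq1$, and $t\in T_{j_0}$ with $[t,x_{j_0}]\neq1$; then $[\,\tilde t,x\,]\in N\cap T_{j_0}$ is nontrivial); hence every closed normal subgroup of such a product is a subproduct $\prod_{j\in J}T_j$, and in particular is semisimple, and a profinite group that is at once semisimple and prosolvable is trivial.

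The heart of the matter is the conjugation action of $K$ on $G_*$. Every element of $K$ normalises each $S_i$, so restriction gives a \emph{continuous} homomorphism $\phi\colon K\to\prod_i\operatorname{Aut}(S_i)$ (each $\operatorname{Aut}(S_i)$ finite), built from the continuous maps $k\mapsto ksk^{-1}$. Its kernel is $C_K(G_*)$; since centralising $G_*$ forces normalising each $S_i$ we have $C_G(G_*)\subseteq K$, whence $\ker\phi=C_G(G_*)$. Computing $\phi$ on $g=(g_i)\in G_*$ shows $\phi(G_*)=\prod_i\operatorname{Inn}(S_i)$ (using $Z(S_i)=1$), a closed normal subgroup of $\prod_i\operatorname{Aut}(S_i)$ with quotient $\prod_i\operatorname{Out}(S_i)$, which is prosolvable since $\operatorname{Out}$ of a finite simple group is solvable (Schreier's conjecture). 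As $\phi(K)$ is closed (compact image), we obtain that $K/\bigl(G_*\,C_G(G_*)\bigr)\cong\phi(K)/\phi(G_*)$ is a closed subgroup of $\prod_i\operatorname{Out}(S_i)$, hence prosolvable.

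It remains to use finite semisimple length, which enters twice; it is genuinely needed, the statement failing for $G$ symmetric with $G_*$ the alternating subgroup. Fix a normal series $1=G_\ell\unlhd\cdots\unlhd G_0=G$ with each $G_i\unlhd G$ and $G_i/G_{i+1}$ semisimple. First, $C_G(G_*)=1$: if $C:=C_G(G_*)\ne1$, choose $i$ maximal with $C\cap G_i\ne1$, so $C\cap G_{i+1}=1$; then $C\cap G_i\cong(C\cap G_i)/(C\cap G_{i+1})\cong (C\cap G_i)G_{i+1}/G_{i+1}$ is a closed normal subgroup of the semisimple group $G_i/G_{i+1}$, hence semisimple by $(\ast)$, while being closed normal in $G$ it lies in $G_*$; but $C\cap G_*\subseteq C_G(G_*)\cap G_*=Z(G_*)=1$, a contradiction. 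Hence $K/G_*$ is prosolvable by the previous paragraph, and it is a closed normal subgroup of $G/G_*$, which again has finite semisimple length. Repeating the same manoeuvre inside $G/G_*$ — intersecting $K/G_*$ with a normal series of $G/G_*$ with semisimple quotients, and noting each successive quotient is simultaneously a subquotient of the prosolvable group $K/G_*$ and a closed normal subgroup of a semisimple group, hence trivial — forces $K/G_*=1$, i.e.\ $K=G_*$.

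The step I expect to require the most care is this last paragraph: one must keep track of which subgroups produced by intersecting with a normal series are normal in the \emph{ambient} group (so that ``semisimple and normal $\Rightarrow$ inside $G_*$'' can be applied) while simultaneously being subquotients of the prosolvable, respectively $G_*$-avoiding, piece. Accordingly I would factor out once and for all the two structural inputs — fact $(\ast)$, and the stability of prosolvability under closed subgroups and Hausdorff quotients — rather than re-deriving them in place; the identifications $\ker\phi=C_G(G_*)$ and $C_G(G_*)=1$ are the only other points where a short non-formal argument is needed, and both become immediate once $(\ast)$ is on hand.
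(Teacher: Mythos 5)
Your proof is correct, and it shares the paper's skeleton --- the continuous conjugation homomorphism $\phi\colon K\to\prod_i\mathrm{Aut}(S_i)$, Schreier's theorem that $\mathrm{Out}(S_i)$ is solvable, and the maximality of $G_*$ together with $Z(G_*)=1$ to dispose of the centraliser --- but it diverges at the decisive step. The paper observes that $W=\bigcap_i N_G(S_i)$ is a closed normal subgroup of $G$, hence itself of finite semisimple length and therefore topologically perfect; perfectness forces $f(W)\leq\prod_i\mathrm{Inn}(S_i)$ at once, giving $W=G_*\times C_G(G_*)$, after which $C_G(G_*)=1$ follows because otherwise its own largest semisimple closed normal subgroup $(C_G(G_*))_*$ would enlarge $G_*$. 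You never invoke perfectness: you conclude only that $K/\bigl(G_*C_G(G_*)\bigr)$ is prosolvable, and then eliminate both $C_G(G_*)$ and the prosolvable quotient $K/G_*$ by intersecting with a normal series with semisimple quotients, using your fact $(\ast)$ that closed normal subgroups of semisimple groups are subproducts (hence semisimple) and that no nontrivial group is both semisimple and prosolvable. What your route buys is that the structural inputs are just $(\ast)$ and the stability of prosolvability under closed subgroups and quotients, rather than the assertion (true, but needing its own small argument) that groups of finite semisimple length are perfect; what it costs is a second series-intersection argument where the paper finishes in one line from $W=G_*\times C_G(G_*)$. The individual steps check out: continuity and closed image of $\phi$, $\ker\phi=C_G(G_*)$, $\phi(G_*)=\prod_i\mathrm{Inn}(S_i)$, the topological identification of $C\cap G_i$ with a closed normal subgroup of $G_i/G_{i+1}$, and your care to take the series normal in $G$ so that $C\cap G_i\unlhd G$ and hence lies in $G_*$; your remark that finite semisimple length is genuinely needed (e.g.\ the symmetric/alternating example) is also accurate.
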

	
	\begin{proof}
		Let $W= \cap_{i \in I} N_G(S_i)$, this is a closed normal subgroup of $G$ and hence has finite semisimple length. In particular $W$ is topologically (in fact even abstractly) perfect. Clearly $G_* \leq W$ and the action of $W$ on $G_*$ by conjugation induces a continuous homomorphism $f: W \rightarrow \prod_{i \in I} \mathrm{Aut}(S_i)$. Since $\mathrm{Out}(S_i)$ is a finite solvable group and $W$ is topologically perfect $f(W) \leq \prod_{i \in I} \mathrm{Inn}(S_i)$. It follows that
		$W= G_* \times L $ where $L=C_G(G_*)$. We claim  that $L$ is the trivial group. Indeed $L$ is a normal closed subgroup of $G$ hence also has finite semisimple length. If $L \not = \{1\}$ then $L_* \not = \{1\}$ is a characteristic closed normal semisimple subgroup of $L$ and hence $L_* $ is a closed normal subgroup of $G$. It follows that $G_*L_*$ is a normal closed semisimple subgroup of $G$ contradicting the maximality of $G_*$. Hence $L=1$ and $W=G_*$.   
	\end{proof}
	
	We can now prove Theorem \ref{ss}.
	\begin{proof}
		Let $G$ be a profinite group with topology $\tau$. Suppose that $G$ has semisimple length $l$. We will prove that $G$ is profinitely rigid by induction on $l$. The case $l=1$ has been established by J. Kiehlmann. Let $G_*=\prod_{i \in I} S_i$ be the largest normal semisimple closed subgroup of $G$. By induction we may assume that $G/G_*$ is profinitely rigid. Let $\tau'$ be another profinite topology on $G$. Lemma \ref{*} gives $G_*= \cap_{i \in I} N_G(S_i)$. Each $S_i$ is a finite subgroup of $G$ and therefore $S_i$ and $N_G(S_i)$ are $\tau'$-closed subgroups of $G$. It follows that $G_*$ is $\tau'$-closed. 
		
		Let $U$ be a $\tau$-open normal subgroup of $G$. It is sufficient to prove that $U$ is open in $\tau'$. Since $G_*$ is closed in both $\tau$ and $\tau'$ and $G/G_*$ is profinitely rigid we obtain that $G_*U$ is open in $\tau '$. Replacing $G$ with $G_* U$ we may assume that $G_*U=G$. Moreover  $U \cap G_*$ is an open normal subgroup of $G_*$ . Since $G_*$ is profinitely rigid we have that $G_* \cap U$ is open in $G_*$ under the induced topology from $\tau'$ and hence $G_* \cap U$ is $\tau'$-closed subgroup of $G$. By replacing $G$ with $G/(G_* \cap U)$ we may assume that $G_* \cap U=\{1\}$. Together with $G_*U=G$ we deduce that $G=G_* \times U$. Since $G_*$ is a semisimple group it has trivial center and we deduce $U=C_G(G_*)$. Thus $U$ is closed in $\tau'$ and since $|G:U|$ is finite $U$ is open in $\tau'$. Theorem \ref{ss} is proved.
	\end{proof}
	
	Although an extension of a profinite group of finite semisimple length by a profinite group of finite semisimple length remains rigid, this is not true for general profinitely rigid groups as can be shown by the following example:
	\begin{exam}
		Assume there exists a profinitely rigid group $G$  which admits a noncontinuous finite abelian image $A$. Being finite, $A$ is profinitely rigid as well. Look at the direct product $A\times G$. It admits a natural profinite topology when $G$ is considered as a closed subgroup. Now let $\varphi:G\to A$ be a noncontinuous homomorphism. Then $\{\varphi(g)g\}$ is another complement to $A$ inside $A\times G$. We claim that $\{\varphi(g)g\}$ is not closed in $A\times G$. Indeed, since $\varphi:G\to A$ is noncontinuous, there admits an element $h\in G$ such that $h\in \overline{\ker(\varphi)}\setminus \ker(\varphi)$. Assume that $h\in \{\varphi(g)g\}$. There exists an element $x\in G$ such that $h=x\varphi(x)$. So $G\ni x^{-1}h=\varphi(x)\in A$. Thus, $\varphi(x)=1$, meaning that $x\in \ker(\varphi)$ and $h=x$, a contradiction. Hence, $h\notin \{\varphi(g)g\}$. However, as $h\in \overline{\ker(\varphi)}$, for every normal open subgroup $U\unlhd_o G$ there exists $x\in \ker(\varphi)$ such that $hU=xU$. It implies that $hU=x\varphi(x)U$. As the set of all open normal subgroups of $G$ is a basis for the profinite topology on $A\times G$, we get that $h\in \overline{\{\varphi(g)g\}}$. Now, as $\{\varphi(g)g\}\cong G$ it has a (unique) profinite topology. So we can define on $A\times G$ the topology induced from the decomposition $A\times \{\varphi(g)g\}$ and the product topology. This is a different topology, since $\{\varphi(g)g\}$ is closed. 
		
		We are left to prove the existence of such $G$.
		Let $(L_i)_{i \in \mathbb N}$ be a sequence of finite groups such that the word width of $L_i$  with respect to squares tends to infinity. For example we may take $L_i=F_i/[F_i^2,F_i]$, where $F_i$ is the free group of rank $i$. Let $T:=\prod_{i \in \mathbb N} L_i$. The algebraic subgroup $T^2$ generated by all squares of $T$ is not closed in $T$ and therefore $T$ has a nonopen subgroup of index $2$. Hence $T$ has a noncontinuous abelian homomorphic image of size 2. Let $M_i$ be the standard wreath product $A_5 \wr L_i$ and observe that the center of $M_i$ is trivial. Let $G=\prod_{i \in \mathbb N} M_i$. We claim that the profinite topology of $G$ is unique. Let $ j \in \mathbb N$ and let $P_j= \cap_{i \not =j} C_G(M_i)$. Since $M_i$ are finite groups the group $P_j$ is closed in any profinite topology of $G$. Note that $P_j$ is the kernel of the projection $G \rightarrow M_j$. Thus $P_j$ is open with respect to any topology of $G$ and it easily follows that $G$ is  profinitely rigid. Since $G$ maps continuously onto $\prod_{i \in \mathbb N} L_i$ we deduce that $G$ has a noncontinuous image isomorphic to $C_2$.

	\end{exam}
	
	The first examples of profinite groups which are not weakly rigid were given by J. Kiehlmann in \cite{kiehlmann2013classifications}:
	\begin{exam} \label{ee}
		Let $p$ be a prime number. Then the groups $G_1=\prod_{n\in \mathbb{N}}C_{p^n}$ and $G_2=\mathbb Z_p \times \prod_{n\in \mathbb{N}}C_{p^n}$ are abstractly isomorphic, but are not isomorphic as profinite groups. Thus $G_1$ is not weakly rigid.
	\end{exam}
	All of the above examples of nonrigid profinite groups admit an abelian quotient. We use Kiehlmann's example \ref{ee} of groups $G_1$ and $G_2$ in order to construct a perfect profinite group which is not weakly rigid.
	
	Let $W_i$ be the closed subgroup of $G_i^5$ defined as follows.
	
	\[W_i= \{ (x_1, \ldots, x_5) \in G_i^5 \ | \ \sum_{i=1}^5 x_i=1\}.\]
	
	Define an action of the alternating group $A_5$ on $G_i^5$ and on $W_i$ by permutations of the five coordinates. Let $L_i= W_i \rtimes A_5$, this is a profinite group with the topology induced by its open subgroup $W_i$. 
	
	\begin{exam}
		The profinite groups $L_1$ and $L_2$ above are perfect, abstractly isomorphic but not isomorphic as profinite groups.    
	\end{exam}
	
	Proof: To show that $L_i$ is perfect it is enough to show that $a:=(x,x^{-1},1,1,1)$ belongs to $L_i'$ for any $x \in G_i$, since the conjugates of all such $a$ under $A_5$ generate $W_i$.
	
	Let $b=(x,1,x^{-1},1,1) \in W_i$ and $g=(12)(45) \in A_5$, we have that $a=b^{-1}b^g=[b,g] \in L_i'$ as required. 
	
	Let $f:G_1 \rightarrow G_2$ be an abstract isomorphism between $G_1$ and $G_2$. We define an isomorphism $F: L_1 \rightarrow L_2$ by declaring
	$F(g)=g$ for all $g \in A_5$ and $F((x_1, \ldots, x_5))=(f(x_1), \ldots, f(x_5))$ for all $(x_1, \ldots, x_) \in W_1$. It is trivial to check that $F$ is indeed a group isomorphism.
	
	On the other hand suppose that $h: L_1 \rightarrow L_2$ is a continuous isomorphism. Then $h(W_1)=h(W_2)$ since $W_i$ is the largest normal abelian subgroup of $L_i$. It follows that $W_1$ and $W_2$ are isomorphic as profinite groups. This is impossible because $W_1$ is the closure of its torsion subgroup and $W_2$ is not (since $G_2$ and hence $W_2$ have continuous homomorphisms onto $\mathbb Z_p$).

	We say that a finite group is anabelian if it has no abelian composition factors. Similarly a profinite group is said to be anabelian if it is an inverse limit of anabelian finite groups. 
	Note that a profinite anabelian group could have finite index subgroups which are not open, for example see \cite[Proposition 2]{segal2018remarks}. This motivates the following.
	\begin{oques}
		Is every anabelian profinite group profinitely rigid?
	\end{oques}
	
	Let $G$ be a profinite group which is not weakly rigid. This means that $G$ admits two profinite topologies $\tau,\tau'$ such that $(G,\tau)$ and $(G,\tau')$ are not isomorphic as topological groups. How different can these topological group be from each other?  The first easy observation is that they have the same supernatural order. Recall that the supernatural order of a profinite group $G$, $o(G)$, is defined by the lowest common multiple of the orders of all finite continuous quotients of $G$. By \cite[Proposition 4.2.3]{ribes2000profinite}, for every finite abstract quotient $A$ of $G$, $o(A)\mid o(G)$. This implies that $o(G)$ can be described as the lowest common multiple of the orders of all finite quotients of $G$. Thus, $o(G)$ is independent of the profinite topology on $G$. As an immediate result, we get that if for some $p$, a $p$-Sylow subgroup $P_{\tau}$ of $(G,\tau)$ for some profinite topology $\tau$ on $G$ is finite, then for every profinite topology $\tau'$ on $G$, $P_{\tau'}\cong P_{\tau}$. This result in fact holds in general:
	\begin{thm}
		\label{p-sylow invariant}
		Let $\varphi:G\to H$ be an abstract isomorphism of profinite groups. Let $P$ be a $p$-Sylow subgroup of $G$. Then $\varphi(P)$ is a $p$-Sylow subgroup of $H$.
	\end{thm}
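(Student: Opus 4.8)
The plan is to reduce the statement to a purely algebraic fact about pro-$p$ groups and then to run a symmetric ``sandwich'' argument using both $\varphi$ and $\varphi^{-1}$. First I would establish the following lemma: \emph{if $K$ is a pro-$p$ group, then every quotient of $K$ by an abstract normal subgroup of finite index is a $p$-group.} To prove it I would first show that $K$ is $q$-divisible for every prime $q \neq p$ (the map $x \mapsto x^q$ is surjective): on each finite quotient $K/U$ of $K$ (a finite $p$-group) the $q$-th power map is onto, since for $y \in K/U$ one can pick $m$ with $mq \equiv 1$ modulo the order of $y$ and then $(y^m)^q = y$; so for a fixed $g \in K$ the finite nonempty sets $\{xU : x^qU = gU\}$ form an inverse system over the directed set of open normal subgroups $U$, and any element of $\invlim$ of this system is a $q$-th root of $g$. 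Granted $q$-divisibility for all $q \neq p$, if some finite abstract quotient $K/N$ had order divisible by a prime $q \neq p$, then Cauchy's theorem together with repeated extraction of $q$-th roots would produce elements of unbounded $q$-power order in the finite group $K/N$ --- impossible. Alongside this I would record the easy topological companion: if $A$ is an abstract subgroup of a profinite group $H$ all of whose finite quotients are $p$-groups, then, using $\overline{A}\,V = A\,V$ for every open normal subgroup $V$ of $H$ (so that $\overline{A}/(\overline{A}\cap V) \cong AV/V \cong A/(A\cap V)$ is an abstract quotient of $A$), the closure $\overline{A}$ is a closed pro-$p$ subgroup of $H$.

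With these in place the main argument is short. Let $P$ be a $p$-Sylow subgroup of $G$, i.e.\ a maximal closed pro-$p$ subgroup of $G$. Since $\varphi$ restricts to an abstract isomorphism $P \to \varphi(P)$, the lemma gives that every finite abstract quotient of $\varphi(P)$ is a $p$-group, so $S := \overline{\varphi(P)}$ is a closed pro-$p$ subgroup of $H$; I would then enlarge $S$ to a $p$-Sylow subgroup $\widetilde S$ of $H$, which is possible because every closed pro-$p$ subgroup of a profinite group lies in a maximal one. Now I would apply the same reasoning to $\varphi^{-1} \colon H \to G$: every finite abstract quotient of $\widetilde S$, hence of $\varphi^{-1}(\widetilde S)$, is a $p$-group, so $\overline{\varphi^{-1}(\widetilde S)}$ is a closed pro-$p$ subgroup of $G$. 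But $\varphi^{-1}(\widetilde S) \supseteq \varphi^{-1}(S) \supseteq P$, so this closed pro-$p$ subgroup of $G$ contains the maximal one $P$, hence equals $P$; therefore $\varphi^{-1}(\widetilde S) \subseteq P$ as well, giving $\varphi^{-1}(\widetilde S) = P$, and applying the bijection $\varphi$ yields $\varphi(P) = \widetilde S$. Thus $\varphi(P)$ is a $p$-Sylow subgroup of $H$; in particular it is closed, even though $\varphi$ need not be continuous.

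The genuinely nontrivial step is the first lemma, that a pro-$p$ group has no finite abstract quotient other than a $p$-group; I expect this to be where the real content lies. After that, the proof uses nothing about $\varphi$ beyond being a bijective homomorphism, plus the existence and maximality of Sylow pro-$p$ subgroups in profinite groups. The remaining points --- that the inverse limit of nonempty finite sets over a directed poset is nonempty, and the identity $\overline{A}\,V = A\,V$ --- are routine and I would dispatch them quickly.
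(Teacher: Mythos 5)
Your proof is correct, and while it shares the paper's overall skeleton --- produce a $p$-Sylow subgroup of $H$ containing $\varphi(P)$, then run the symmetric argument with $\varphi^{-1}$ and finish by maximality of $P$ --- the route to that Sylow subgroup is genuinely different. The paper first proves a stronger intermediate claim: the image of a $p$-Sylow subgroup of a profinite group under \emph{any} abstract epimorphism onto a finite group is a full Sylow subgroup of the image. This step leans on the Nikolov--Segal theorem (finitely generated profinite groups are strongly complete), applied to the closure of a subgroup generated by finitely many preimages, together with the cited fact that abstract finite images of pro-$p$ groups are $p$-groups; the Sylow subgroup $Q\supseteq\varphi(P)$ of $H$ is then assembled as the inverse limit of these Sylow images. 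You avoid Nikolov--Segal entirely: you need only the $p$-group-image fact (your divisibility-plus-compactness argument is correct, but it is precisely \cite[Proposition 4.2.3]{ribes2000profinite}, which the paper cites, so you could simply quote it), from which you get that $\overline{\varphi(P)}$ is a closed pro-$p$ subgroup via $\overline{A}V=AV$, and then you enlarge it to a Sylow subgroup using the standard fact that every pro-$p$ subgroup of a profinite group lies in one. Your version is thus more elementary and self-contained. What the paper's heavier route buys is the intermediate statement itself --- Sylow subgroups map onto Sylow subgroups in every finite abstract quotient --- which is reused later in the paper (in the ``first step'' of the proof that pronilpotent groups in $\mathcal{A}_1^{\text{pro}}$ are strongly complete); your argument does not produce that statement directly.
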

	\begin{proof}
		First we show that the image of a $p$-Sylow subgroup of a profinite group is a $p$-Sylow subgroup in every finite abstract quotient.
		
		Let $\varphi:G\to A$ be an abstract epimorphism onto a finite group.  Let $x_1,...,x_n$ be a finite set of preimages of $A$. Look at $H=\overline{\langle x_1,...,x_n\rangle}$. It is a finitely generated profinite group and hence strongly complete. Thus, $\varphi|_H$ is continuous. Now let $P_0\in \operatorname{Syl}_p(H)$. Since $\varphi_H$ is continuous, $\varphi(P_0)\in \operatorname{Syl}_p(A)$. Since $H$ is closed, there exists some $P_1\in \operatorname{Syl}_p(G)$ such that $P_1\cap H=P_0$. So, $\varphi(P_1)\supseteq \varphi(P_0)$. On the other hand, since $P_1$ is a pro-$p$ group, by \cite[Proposition 4.2.3]{ribes2000profinite} $\varphi(P_1)$ is a finite $p$-group. So $\varphi(P_1)=\varphi(P_0)\in \operatorname{Syl}_p(A)$. Finally, $P$ and $P_1$ are conjugate and so are their images, so $\varphi(P)\in \operatorname{Syl}_p(A)$.
		
		Now let $P$ be a $p$-Sylow subgroup of $G$. By the previous claim, the images of $P$ (after applying $\varphi$) in any continuous finite quotient of $H$ are $p$-Sylow subgroups, which are obviously compatible. So $P$ is contained in their inverse limit $Q$ which is a $p$-Sylow subgroup of $H$. By the same argument, $\varphi^{-1}(Q)$ is contained in a $p$-Sylow subgroup $P'$ of $G$. So $P\leq P'$. Since $P$ is a $p$-Sylow subgroup it can not be properly contained in any $p$-subgroup, thus $P=P'$ and we get that $\varphi(P)=Q$.  
	\end{proof}
	As for the rank of the groups we have the following:
	\begin{rem}
		Let $\tau, \tau'$ be two profinite topologies on an abstract group $G$ and assume that $(G,\tau)$ is finitely generated as a topological group, then so is $(G,\tau')$. In fact, these two groups are isomorphic as profinite groups, since $(G,\tau)$ is strongly complete. For topologies of infinite rank, however, having an equality depends on the injectivity of the map $\nn\to 2^{\nn}$ on the class of cardinals, since by \cite{BarOn+2021}, $\omega_0(\hat{G})=2^{2^{\omega_0(G,\tau)}}$. Here $\omega_0(G)$ stands for the \textit{local weight} of $G$, which for profinite groups which are not finitely generated equals to minimal cardinality of a set of generators converging to 1, also referred to as the \textit{rank}.
	\end{rem}
	\section{Connections with cohomological goodness}
	The notions of profinite type and profinite rigidity have a strong connection to group cohomology. 
	
	In \cite{serre1979galois} Serre defined the notion of cohomological goodness as follows. Let $\varphi:G\to K$ be a homomorphism from an abstract group to a profinite group such that $\varphi(G)$ is dense in $K$, and let $M$ be a finite continuous $K$-module. Then $\varphi$ induces a series of maps $\varphi^i:H^i(K,M)\to H^i(G,M)$.  Of particular interest is the case where $K=\hat{G}$ is the profinite completion of $G$. The class $\mathcal{A}_n$ consists of those group $G$ for which $\varphi^i$ is an isomorphism for all $0\leq i\leq n$ and every finite continuous $\hat{G}$-module $M$. An abstract group $G$ is called \textit{cohomologically good} if $G\in \mathcal{A}_n$ for all $n$. Observe that $\mathcal{A}_1$ equals to the class of all abstract groups. A lot of research is devoted to identifying  cohomologically good groups, and in particular the class $\mathcal{A}_2$, as can be seen, for example, at, \cite{lorensen2008groups, grunewald2008cohomological}. In fact we have the following characterization: a residually finite group $G$ belongs to $\mathcal{A}_2$ if and only if every finite extension of $G$ is residually finite (see \cite[Proposition 2.4]{lorensen2008groups}).
	
	An analog of Serre's cohomological goodness which appears in the literature is the following: Let $G$ be a profinite group, $K=G$ and let $\varphi:G\to G$ be the identity map. Let $\mathcal{A}_n^{\text{pro}}$ be the class of all profinite groups $G$ for which $\varphi^n:H^n_{\text{con}}(G,M)\to H^n_{\text{abs}}(G,M)$ is isomorphism for every finite continuous $G$- module.  The maps $\varphi^n$ are also called the \textit{comparison maps} (\cite{fernandez2007comparison}). For pro-$p$ groups, the first comparison map is known to examine whether the group is finitely generated (see \cite{fernandez2007comparison}). Using the proof of Theorem \ref{p-sylow invariant}, we can generalize this result to general pronilpotent groups.
	\begin{prop}
		The pronilpotent groups in  $\mathcal{A}_1^{\text{pro}}$ are precisely the strongly complete pronilpotent groups. 
	\end{prop}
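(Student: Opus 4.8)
The plan is to prove the two inclusions separately. Throughout write $G=\prod_p G_p$ for the decomposition of a pronilpotent group into its Sylow pro-$p$ subgroups, and observe first that $\varphi^1$ is always injective: a continuous $1$-cocycle which is an abstract coboundary $g\mapsto g\cdot m-m$ is automatically a continuous coboundary, since the action on the finite module $M$ is continuous. So in both directions the content lies in the surjectivity of $\varphi^1$. The one structural input I will isolate is that a product $\prod_p G_p$ of topologically finitely generated pro-$p$ groups is strongly complete; I will indicate below why this holds and why it is the delicate point.

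For the direction ``strongly complete $\Rightarrow\mathcal A_1^{\mathrm{pro}}$'' (which needs no pronilpotency), let $M$ be a finite continuous $G$-module and put $K=\ker(G\to\mathrm{Aut}(M))$, an open normal subgroup; being an open subgroup of a strongly complete group, $K$ is itself strongly complete. As $K$ acts trivially on $M$ we have $M^K=M$ and $H^1(K,M)=\mathrm{Hom}(K,M)$, and every abstract homomorphism from the strongly complete group $K$ to the finite group $M$ has open kernel and is therefore continuous, so $\mathrm{Hom}_{\mathrm{con}}(K,M)=\mathrm{Hom}_{\mathrm{abs}}(K,M)$, an identification that is $G/K$-equivariant. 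Comparing the five-term inflation--restriction exact sequences of $(G,K)$ in continuous and in abstract cohomology, naturality of inflation, restriction and transgression yields a commutative ladder
\[
\xymatrix@C=9pt{
0\ar[r] & H^1(G/K,M)\ar[r]\ar@{=}[d] & H^1_{\mathrm{con}}(G,M)\ar[r]\ar[d]^{\varphi^1} & H^1_{\mathrm{con}}(K,M)^{G/K}\ar[r]\ar[d]^{\cong} & H^2(G/K,M)\ar@{=}[d]\\
0\ar[r] & H^1(G/K,M)\ar[r] & H^1_{\mathrm{abs}}(G,M)\ar[r] & H^1_{\mathrm{abs}}(K,M)^{G/K}\ar[r] & H^2(G/K,M)
}
\]
in which every vertical map except the middle one is an isomorphism (the outer ones being identities on the cohomology of the finite group $G/K$). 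The five lemma then forces $\varphi^1$ to be an isomorphism, so $G\in\mathcal A_1^{\mathrm{pro}}$. This is where the device from the proof of Theorem~\ref{p-sylow invariant} recurs: one localizes at a subgroup ($K$ here, a finitely generated dense subgroup there) on which strong completeness makes abstract data automatically continuous, and then reassembles.

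For the converse, suppose $G$ is pronilpotent and lies in $\mathcal A_1^{\mathrm{pro}}$. Applying the hypothesis to the trivial module $\mathbb F_p$ gives $\mathrm{Hom}_{\mathrm{abs}}(G,\mathbb F_p)=\mathrm{Hom}_{\mathrm{con}}(G,\mathbb F_p)$, so every subgroup of $G$ of index $p$ is open; letting $p$ vary, each Sylow subgroup $G_p$ has all its index-$p$ subgroups open, which for a pro-$p$ group is equivalent to $G_p$ being topologically finitely generated (otherwise $G_p/\overline{\Phi(G_p)}$ is an infinite-dimensional $\mathbb F_p$-space and has a dense, hence non-open, hyperplane). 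It then remains to see that $G=\prod_p G_p$ with every $G_p$ finitely generated is strongly complete: given a finite-index normal $N\unlhd G$ one reduces, by intersecting over primes, to the case that $G/N$ is a finite $p$-group; the restriction of $G\to G/N$ to $\prod_{q\neq p}G_q$ is then trivial, because a finite quotient of that pronilpotent group is nilpotent with every Sylow factor a quotient of some $G_q$, hence has no $C_p$ among its composition factors; so $G\to G/N$ factors as the continuous projection $G\to G_p$ followed by a homomorphism out of the strongly complete group $G_p$, and is therefore continuous, so $N$ is open. Combining the two directions proves the proposition.

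The step I expect to demand the most care is the structural fact used in the converse — that a product of finitely generated pro-$p$ groups is strongly complete — which rests on the fact that finitely generated (pro-$p$, indeed profinite) groups are strongly complete together with the elementary but fiddly analysis of finite quotients of pronilpotent groups just sketched. The other point needing attention is the first direction: one must verify that the two inflation--restriction sequences form a genuine ladder (compatibility of the comparison maps with inflation, restriction and transgression) and that $H^1_{\mathrm{con}}(K,M)\cong H^1_{\mathrm{abs}}(K,M)$ is $G/K$-equivariant, after which the five lemma finishes the argument.
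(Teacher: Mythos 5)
Your overall architecture is sound, and most of it matches the paper: the forward direction is correct (though much heavier than necessary -- strong completeness gives $\hat G=G$ via the identity map, and the Serre comparison map in degree $\le 1$ is an isomorphism for every abstract group, which is the paper's one-line argument; your inflation--restriction ladder with $K=\ker(G\to\mathrm{Aut}(M))$ is a valid substitute), and the reduction of the converse to finite generation of each Sylow subgroup via the trivial module $\F_p$ is essentially the paper's argument (the paper cites the pro-$p$ comparison result instead of re-deriving it with the dense-hyperplane observation, but that is the same content).

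The genuine gap is in the assembly step. First, ``given a finite-index normal $N\unlhd G$ one reduces, by intersecting over primes, to the case that $G/N$ is a finite $p$-group'' presupposes that the abstract finite quotient $G/N$ is nilpotent: intersecting the preimages of $O^p(G/N)$ over primes recovers $N$ only when $G/N$ is the direct product of its Sylow subgroups, and for a possibly non-open $N$ that is not known at this point -- it is essentially what you are trying to prove. Second, your justification that the restriction to $\prod_{q\neq p}G_q$ is trivial (``a finite quotient of that pronilpotent group is nilpotent with every Sylow factor a quotient of some $G_q$'') implicitly assumes the finite quotient is generated by the images of the $G_q$; when infinitely many primes occur the $G_q$ generate only a dense abstract subgroup of the product, so this does not follow as stated. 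Both points are repaired by the fact the paper itself invokes, \cite[Proposition 4.2.3]{ribes2000profinite}: any finite abstract quotient of a profinite group has order dividing its supernatural order. Given $N$ of finite index, let $S$ be the (finite) set of primes dividing $|G/N|$; then the image of $\prod_{q\notin S}G_q$ in $G/N$ has order both prime to $S$ and dividing $|G/N|$, hence is trivial, so the quotient map factors through $\prod_{q\in S}G_q$, a topologically finitely generated (hence strongly complete, \cite{SegalAndNikolov}, or simply a finite product of strongly complete groups) profinite group; thus $N$ is open, with no reduction to $p$-group quotients needed. Note finally that the paper routes this step differently: it proves, for an arbitrary profinite group, that strong completeness of all Sylow subgroups implies strong completeness, using the profinite completion and the Sylow argument of Theorem \ref{p-sylow invariant}; your product argument, once repaired as above, is a legitimate and somewhat more elementary alternative in the pronilpotent case.
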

	\begin{proof}
		Let $G$ be a pronilpotent group.
		
		First assume that $G$ is strongly complete. Then $\hat{G}\cong G$ where the natural homomorphism is just the identity map. Then since every group belongs to $\mathcal{A}_1$ we are done.
		
		Now assume that $G\in \mathcal{A}_1^{\text{pro}}$. Let $P$ be the unique $p$-Sylow subgroup of $G$, and choose $M=\F_p$ to be a module with the trivial action. Since $\varphi^1:H^1_{\text{con}}(\hat{G},\F_p)\to H^1_{\text{abs}}(G,\F_p)$ is always an isomorphism, it is equivalent to saying that $\varphi^1:H^1_{\text{con}}(\hat{G},\F_p)\to H^1_{\text{con}}(G,\F_p)$ is an isomorphism. Recall that by 
		the decomposition of $G$ to direct product $G\cong P\times P'$, for a $p$-Sylow subgroup $P$ and a complement $P'$, the inclusion map induces an isomorphism $H^1_{\text{con}}(G,\F_p)\to H^1_{\text{con}}(P,\F_p)$. 
		Since $G\cong P\times P'$, where $P'$ is of order prime to $p$, then $\hat{G}\cong \hat{P}\times \hat{P'}$ and by \cite[Proposition 4.2.3]{ribes2000profinite} $\hat{P}$ is $p$-Sylow subgroup of $\hat{G}$. Again the natural map $H^1_{\text{con}}(\hat{G},\F_p)\to H^1_{\text{con}}(\hat{P},\F_p)$ is an isomorphism, and hence so is the natural map $H^1_{\text{con}}(P,\F_p)\to H^1_{\text{abs}}(P,\F_p)\cong H^1_{\text{con}}(\hat{P},\F_p)$. Hence by \cite{fernandez2007comparison} $P$ is strongly complete. This holds for every $p$-Sylow subgroup of $G$. We are left to show that if $G$ is a pronilpotent  group such that for every $p$, the $p$-Sylow subgroup of $G$ is strongly complete, then so is $G$. This will follow from the next general result.
	\end{proof}

	\begin{prop}
		Let $G$ be a profinite group such that for every $p$, the $p$-Sylow subgroups of $G$ are strongly complete, then $G$ is strongly complete.
	\end{prop}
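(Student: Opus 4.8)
The plan is to prove that every finite-index \emph{normal} subgroup $H\unlhd_f G$ is open; this is enough, because an arbitrary finite-index subgroup contains its normal core, which again has finite index, so openness for normal subgroups gives strong completeness. Fix such an $H$ and set $n=[G:H]$.

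The first step is to trap an open normal subgroup inside $H$ using the Sylow hypothesis. For each prime $p\mid n$ I would fix $S_p\in\operatorname{Syl}_p(G)$; since $[S_p:S_p\cap H]$ is finite and $S_p$ is strongly complete, $S_p\cap H$ is open in $S_p$, so there is $N_p\unlhd_o G$ with $N_p\cap S_p\subseteq H$. As $H$ and $N_p$ are normal in $G$ and all Sylow $p$-subgroups of $G$ are conjugate, conjugating this inclusion gives $N_p\cap S\subseteq H$ for \emph{every} $S\in\operatorname{Syl}_p(G)$. Then $N:=\bigcap_{p\mid n}N_p$ is open and normal in $G$, and I claim $N\cap S\subseteq H$ for every Sylow subgroup $S$ of $G$, for every prime: for $p\mid n$ this is immediate, while for $p\nmid n$ the subgroup $S\cap H$ is open in the pro-$p$ group $S$ (again using that $S$ is strongly complete), hence of $p$-power index; this index divides $n$, so $S\subseteq H$ and a fortiori $N\cap S\subseteq H$.

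The heart of the argument is then to show $N\subseteq H$, which finishes the proof since $N$ is open. Suppose instead that $\bar N:=N/(N\cap H)$ is a nontrivial finite group; by Cauchy it contains an element $\bar t$ of some prime order $p$. Lift $\bar t$ to $t\in N$ and pass to the closed procyclic subgroup $A=\overline{\langle t\rangle}\le N$; being abelian pronilpotent it decomposes as $A=A_p\times B$, with $A_p$ its Sylow $p$-subgroup and $B$ the product of the Sylow $q$-subgroups for $q\ne p$. The image of $A$ in $\bar N$ is cyclic of order $p$, while $B$ maps to it trivially (as $B$ is $p$-divisible it has no quotient of order $p$), so $A_p$ maps onto $\langle\bar t\rangle$; hence $A_p\not\subseteq N\cap H$ and so $A_p\not\subseteq H$. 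But $A_p$ is a pro-$p$ subgroup of $G$ lying in $N$, hence $A_p\subseteq S$ for some $S\in\operatorname{Syl}_p(G)$, and then $A_p\subseteq N\cap S\subseteq H$ — a contradiction. Therefore $N\subseteq H$, $H$ is open, and $G$ is strongly complete.

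I expect the main obstacle to be that $N\cap H$, although of finite index in $N$, is not a priori closed, so one cannot invoke the standard statement that Sylow subgroups of $N$ have Sylow image in the finite quotient $N/(N\cap H)$. The last step is arranged precisely to sidestep this: it only ever passes to the closed subgroup generated by a single lifted element — whose Sylow decomposition is completely explicit — and never uses continuity of the quotient map.
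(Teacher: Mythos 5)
Your proof is correct in substance, but it takes a genuinely different route from the paper. The paper argues through the profinite completion: using (the proof of) Theorem \ref{p-sylow invariant} it shows that the closure $\overline{P}$ of a $p$-Sylow subgroup $P$ of $G$ in $\widehat{G}$ is a $p$-Sylow subgroup of $\widehat{G}$; strong completeness of $P$ makes $P\to\overline{P}$ an isomorphism, so the kernel $K$ of the canonical retraction $\widehat{G}\to G$ meets every Sylow subgroup of $\widehat{G}$ trivially, forcing $K=\{1\}$ and $G\cong\widehat{G}$. You instead work directly with a finite-index normal subgroup $H$: you trap an open normal subgroup $N$ with $N\cap S\subseteq H$ for every Sylow subgroup $S$ (using strong completeness of the Sylows and conjugacy), and then kill $N/(N\cap H)$ by a Cauchy--procyclic argument. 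Your route is more hands-on and avoids both the completion and Theorem \ref{p-sylow invariant}, whose proof itself rests on strong completeness of finitely generated profinite groups; what the paper's route buys is brevity, since the Sylow-in-$\widehat{G}$ lemma does all the work at once.

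One assertion in your last step does need a line of justification: that the image of $A=\overline{\langle t\rangle}$ in $\bar N=N/(N\cap H)$ is cyclic of order $p$. This is not automatic from ``image of a closure,'' precisely because $N\cap H$ is not known to be closed. It is nevertheless true: with $m=[A:A\cap H]$ one has $A^m\subseteq A\cap H$, and $A^m$ is closed of finite index (the quotient $A/A^m$ is procyclic of finite exponent, hence finite), so $A\cap H$ is open in $A$ and $A/(A\cap H)$ is a continuous quotient of the procyclic group $A$, hence cyclic, generated by the image of $t$, which has order $p$. Alternatively you can bypass the claim entirely: write $t=t_p t_{p'}$ with $t_p\in A_p$ and $t_{p'}\in B$; if $t_p\in H$, then $\bar t$ equals the image of $t_{p'}$, which lies in the image of the abelian $p$-divisible group $B$, a finite abelian group in which the $p$-power map is surjective and hence injective, so it has no element of order $p$ --- a contradiction. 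Either way $A_p\not\subseteq H$, and your contradiction with $A_p\subseteq N\cap S\subseteq H$ goes through, so the proposal stands.
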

	\begin{proof}
		First step: Let $G$ be a profinite group and $P$ a $p$-Sylow subgroup of $G$, then $\Bar{P}$, the closure of $P$ in $\hat{G}$, is a $p$- Sylow subgroup of $\hat{G}$. Indeed, that follows since $\bar{P}\cong \invlim \psi_i(P)$, for $\psi_i:G\to A_i$ runs over all the finite abstract quotients of $G$, is an inverse limit of $p$-Sylow subgroups, by the proof of Proposition \ref{p-sylow invariant}. 
		
		Second step: Assume that $\varphi|_P:P\to \bar{P}$ is an isomorphism. The identity map $\operatorname{id}:G\to G$ together with the universal property of the profinite completion (see \cite[Chapter 3]{ribes2000profinite}), yields the following commutative diagram:
		\[
		\xymatrix@R=14pt{ G\ \ar[rd]^{\hat{id}}& \\
			G \ar[u]_{\varphi} \ \ar[r]_{id}& G\\
		}
		\]
		such that for every $g\in G$, $\hat{id}(\varphi(g))=g$. Let $K=\ker(\hat{id})$ and $Q=\bar{P}\cap K$. Then $Q$ is a $p$-Sylow subgroup of $K$. Since $\varphi:P\to \bar{P}$ is an isomorphism, we conclude that $Q=0$. Applying this result for every prime $p$, we get that $K=0$, i.e, $\varphi:G\to \hat{G}$ is an isomorphism.
	\end{proof}
	The following Lemma is an immediate application of the correspondence between the second cohomology group and group extensions:
	\begin{lem}
		Let $(G,\tau)$ be a profinite group and $M$ a finite continuous module. Denote by $\varphi^2:  H^2_{\text{con}}(G,M)\to H^2_{\text{abs}}(G,M)$ the natural homomorphism. Then
		\begin{enumerate}
			\item $\varphi^2$ is surjective if and only if for every group extension $1\to M\to H\to G\to 1$, $H$ admits a profinite topology which induces $\tau$ on $G$.
			\item $\varphi^2$ is injective if and only if for every profinite extension $H$ of $G$ by $M$, $H$ admits a unique profinite type which induces $\tau$ on $G$ via the quotient topology. I.e, for every pair of profinite topologies $T_1,T_2$ on $H$ such that the quotient map $(H,T_i)\to (G,\tau)$ is continuous, there exists a continuous isomorphism $(H,T_1)\to (H,T_2)$ which is compatible with the quotient map.
		\end{enumerate}
	\end{lem}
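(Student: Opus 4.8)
The plan is to derive both equivalences from the classical dictionary between second cohomology and extensions, the only real work being the transport of topologies along the maps that dictionary provides. Recall that $H^2_{\text{abs}}(G,M)$ is in bijection with the set of equivalence classes of abstract extensions $1\to M\to H\to G\to 1$ inducing the given $G$-action on $M$, where two extensions are equivalent when there is an isomorphism of middle groups restricting to the identity on $M$ and inducing the identity on $G$; the class attached to $H$ is computed from any set-theoretic section $s:G\to H$ via the $2$-cocycle $(g_1,g_2)\mapsto s(g_1)s(g_2)s(g_1g_2)^{-1}$. Using continuous cochains one gets the parallel statement: $H^2_{\text{con}}(G,M)$ is in bijection with equivalence classes of such extensions in which $H$ is profinite and all maps are continuous (here one uses that a continuous epimorphism of profinite groups admits a continuous set-theoretic section). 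Under these bijections the comparison map $\varphi^2$ is simply ``forget the topology''. Finally I would note that in any extension $1\to M\to H\to G\to 1$ with $H$ profinite and $M$ finite, $M$ is automatically closed (a finite subgroup of a Hausdorff group is closed), hence discrete, and the conjugation action of $H$ on $M$ is automatically continuous (its kernel $C_H(M)$ is closed of finite index, so open); thus ``profinite extension of $(G,\tau)$ by $M$'' just means a profinite topology on $H$ making $H\to G$ continuous with quotient topology $\tau$ and inducing the prescribed action on $M$.

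For (1), the point is that $\varphi^2$ is surjective exactly when every abstract extension $1\to M\to H\to G\to 1$ is equivalent, as an abstract extension, to the underlying extension of some profinite extension $1\to M\to H'\to G\to 1$. If so, an equivalence $H\to H'$ (identity on $M$ and on $G$) transports the profinite topology of $H'$ to $H$; since the equivalence induces the identity on $G$, the transported topology makes $H\to G$ continuous with quotient topology $\tau$, as required. Conversely, if every abstract extension $H$ admits such a topology, then $H$ so topologized is a profinite extension and defines a class in $H^2_{\text{con}}(G,M)$ mapping under $\varphi^2$ to $[H]$; as $[H]$ ranges over all of $H^2_{\text{abs}}(G,M)$, $\varphi^2$ is onto.

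For (2), $\varphi^2$ is injective exactly when two profinite extensions with the same underlying abstract extension class are equivalent as profinite extensions. Assuming this, given a profinite extension $H$ and two profinite topologies $T_1,T_2$ on the abstract group $H$ each making it a profinite extension of $(G,\tau)$ by $M$: the two topologized groups define classes in $H^2_{\text{con}}$ with the common image $[H]_{\text{abs}}$, so by injectivity they are equivalent as profinite extensions, yielding a continuous isomorphism $(H,T_1)\to(H,T_2)$ compatible with the projection to $G$. Conversely, assuming uniqueness of profinite type for every profinite extension, take $c_1,c_2\in H^2_{\text{con}}(G,M)$ with $\varphi^2(c_1)=\varphi^2(c_2)$, realize them by profinite extensions $H_1,H_2$, use an abstract equivalence $H_1\to H_2$ to move the topology of $H_2$ onto $H_1$, and then the uniqueness hypothesis gives a continuous isomorphism $H_1\to H_2$ over $G$, so $c_1=c_2$.

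The nontrivial point throughout is that ``equivalence'' must be used in the precise sense of the cohomological dictionary --- an isomorphism that is the identity on $M$ and induces the identity on $G$ --- since only such maps are guaranteed to carry the property ``induces $\tau$ on $G$'' through the transports above and to avoid introducing an ambiguity by an element of $\mathrm{Aut}_G(M)$ in the injectivity half; once this convention is fixed, everything else reduces to the formal extension/cohomology correspondence together with the automatic topological facts recorded in the first paragraph.
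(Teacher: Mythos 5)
Your proof is correct and follows essentially the same route as the paper: both translate $\varphi^2$ into the ``forget the topology'' map under the cohomology/extension dictionary (abstract and continuous) and transport profinite topologies along extension equivalences. You are in fact somewhat more explicit than the paper about the supporting details --- the continuous-section fact behind the continuous dictionary, the automatic closedness/discreteness of $M$ and continuity of the action, and the need to read ``compatible with the quotient map'' as strict equivalence (identity on $M$, identity on $G$) to avoid an $\mathrm{Aut}_G(M)$ ambiguity in the injectivity half, a point the paper glosses over with ``the second direction is similar.''
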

	\begin{proof}
		\begin{enumerate}
			\item Let $c\in H^2_{\text{abs}}(G,M)$. Choose some preimage $\tilde{c}$ of $c$ in $C^2_{\text{abs}}(G,M)$. $\tilde{c}$ corresponds to some abstract group extension $1\to M\to H\to G\to 1$. Assume $H$ is given a profinite topology which is compatible with $\tau$. Hence, $H$ corresponds to some element $c'\in H^2(G,M)$. Let $\tilde{c'}$ be a preimage of $c'$ in $C^2(G,M)$. Since $\tilde{c}$ and $\tilde{c'}$ correspond to the same group extension, they are equivalent in $ H^2_{\text{abs}}(G,M)$, i.e, $i_2(c')=c$.
			
			For the second direction, assume that $i_2$ is onto. Let $1\to M\to H\to G\to 1$ be an abstract group extension. It corresponds to some element $c\in C^2_{\operatorname{abs}}(G,M)$. By assumption, $c$ is equivalent in $H^2_{\operatorname{abs}}(G,M)$ to some element $c'\in H^2(G,M)$. But $c'$ corresponds to a profinite group extension $1\to M\to H'\to G\to 1$. Hence, these extensions are equivalent. In particular, $H\cong H'$, which means that $H$ can be given a profinite topology compatible with $\tau$. 
			\item Let $H_1,H_2$ be profinite extensions of $G$ by $M$ such that $H_1\cong_{\operatorname{abs}} H_2$, as extensions. Each one of them corresponds to an element $c_1,c_2\in H^2_{\text{con}}(G,M)$, correspondingly. Notice that the images of $c_1,c_2$ in $H^2_{\operatorname{abs}} (G,M)$ become equal by the abstract isomorphism. By injectivity of $\varphi^2$ we get that the coset of $c_1,c_2$ in $H^2(G,M)$ are equal, which means that $H_1\cong H_2$ continuously. The second direction is similar.
		\end{enumerate}
	\end{proof}
	A lot of effort is devoted to classifying those pro-$p$ groups for which the second comparison map is an isomorphism. In \cite{sury1993central} Surry proved the isomorphism holds for every solvable and Chevalley $p$-adic analytic group, and conjectured it holds for every $p$-adic analytic group. For profinitely rigid groups these properties can be stated as follows:
	\begin{cor}
		Let $G$ be a rigid profinite group. The map $\varphi^2$ is surjective if and only if every finite extension of $G$ is of profinite type, while $\varphi^2$ is injective if and only if every finite continuous extension of $G$ is weakly rigid.
	\end{cor}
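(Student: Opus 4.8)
The plan is to read the statement off the \emph{preceding Lemma}, using only the fact that a rigid profinite group carries a unique profinite topology $\tau$ up to continuous isomorphism. The one point worth isolating at the outset is this: if $1\to M\to H\to G\to 1$ is any extension with $M$ finite, then in \emph{any} Hausdorff group topology on $H$ the subgroup $M$, being finite, is closed; hence every profinite topology on the abstract group $H$ makes $H\to H/M$ a continuous open quotient and induces a profinite group topology on $H/M\cong G$. Since $G$ is rigid, that induced topology is forced to be $\tau$ (up to continuous isomorphism of $G$). Consequently the two clauses appearing in the Lemma, ``$H$ admits a profinite topology which induces $\tau$ on $G$'' and ``$H$ admits a unique profinite type which induces $\tau$ on $G$ via the quotient topology'', collapse respectively to ``$H$ is of profinite type'' and ``$H$ is weakly rigid''. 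Granting this, both equivalences of the Corollary are immediate restatements of the Lemma, applied with $M$ ranging over all finite continuous $G$-modules, so that ``finite extension of $G$'' has its literal meaning.

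For surjectivity I would argue as follows. By part~(1) of the Lemma, $\varphi^2$ is surjective precisely when every extension $1\to M\to H\to G\to 1$ admits a profinite topology inducing $\tau$ on $G$; the forward direction therefore says in particular that every finite extension $H$ of $G$ is of profinite type. For the converse, start with such an extension, choose a profinite topology $T$ on $H$ (possible since $H$ is of profinite type by hypothesis), note that $M$ is $T$-closed, pass to the quotient topology on $H/M\cong G$, and invoke rigidity of $G$ to identify it with $\tau$; composing the quotient map with the resulting automorphism of $G$ turns $T$ into a topology inducing $\tau$ through the extension, as required by the Lemma. This last composition only alters the extension class by the $\mathrm{Aut}(G)$-action, under which $\varphi^2$ is equivariant, so it does not disturb the statement.

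For injectivity I would use part~(2) of the Lemma, which says $\varphi^2$ is injective exactly when, for every profinite extension $H$ of $G$ by $M$, any two profinite topologies $T_1,T_2$ on $H$ making the quotient map to $(G,\tau)$ continuous are linked by a continuous isomorphism compatible with that quotient map. By the opening remark every profinite topology on $H$ already makes $H\to G$ continuous with induced topology $\tau$, so this restriction on $T_i$ is automatically met, and the condition becomes: any two profinite topologies on $H$ are continuously isomorphic, i.e.\ $H$ is weakly rigid. Conversely, weak rigidity of every finite continuous extension $H$ of $G$ supplies such an isomorphism, and rigidity of $G$ lets one adjust it to be compatible with the quotient map; hence $\varphi^2$ is injective.

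The only place requiring genuine care --- and the step I expect to be the main obstacle to a fully rigorous write-up --- is precisely this matching of the Lemma's ``inducing $\tau$ on $G$'' / ``compatible with the quotient map'' clauses with the bare notions of profinite type and weak rigidity. One must use both that $M$ is finite (so $M$ is a closed normal subgroup in every profinite topology on $H$ and the quotient $H/M$ genuinely makes sense) and that $G$ is rigid (so the quotient topology on $H/M$ cannot be anything other than $\tau$), and one must check that twisting the quotient map by an automorphism of $G$ --- equivalently, replacing the extension class $c$ by $\sigma^{*}c$ for $\sigma\in\mathrm{Aut}(G)$, together with the corresponding twist of the $G$-action on $M$ --- leaves the image and kernel of $\varphi^2$ unchanged, which follows from naturality of the comparison map. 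Once these bookkeeping points are settled the Corollary is nothing more than the Lemma specialised to rigid $G$.
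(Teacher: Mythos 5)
Your overall route is the same as the paper's: the corollary is stated there without a separate proof, as a direct specialization of the preceding Lemma, and that is what you do. Two remarks on the parts that do work. First, you quote rigidity as ``unique profinite topology up to continuous isomorphism'', but that is the paper's \emph{weak} rigidity; rigid means the profinite topology is literally unique. Using the actual definition, once $M$ is finite and hence closed in any profinite topology on $H$, the quotient topology on $H/M\cong G$ is a profinite topology on $G$ and is therefore \emph{equal} to $\tau$, so the projection is automatically continuous to $(G,\tau)$; the whole discussion of twisting by $\sigma\in\mathrm{Aut}(G)$ and equivariance of $\varphi^2$ is unnecessary (and under your weaker reading it would need more care than you give it, e.g.\ about the twisted module structure). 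With that simplification, your surjectivity clause and the implication ``$\varphi^2$ injective $\Rightarrow$ every finite continuous extension is weakly rigid'' are correct. A secondary caution: for the surjectivity clause, ``finite extension'' cannot have its completely literal meaning, since an extension whose induced $G$-action on $M$ is not $\tau$-continuous does not correspond to any class in $H^2_{\text{abs}}(G,M)$ for a finite continuous module, and such an $H$ can never be of profinite type (in a profinite topology on $H$ the centralizer $C_H(M)$ is closed of finite index, hence open, forcing continuity of the action); so the statement must be read over extensions realizing continuous module structures.

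The genuine gap is in the converse of the injectivity clause. Part (2) of the Lemma requires, for two profinite topologies $T_1,T_2$ on $H$ compatible with the projection to $(G,\tau)$, a continuous isomorphism $(H,T_1)\to(H,T_2)$ \emph{compatible with the quotient map} (in the Lemma's proof this is really an equivalence of extensions, i.e.\ inducing the identity on $M$ and on $G$). Weak rigidity of $H$ only supplies \emph{some} continuous isomorphism $\alpha\colon (H,T_1)\to(H,T_2)$: there is no reason $\alpha$ should carry $M$ to $M$, let alone restrict to the identity on $M$ and induce the identity on $G$. Your one-line fix, ``rigidity of $G$ lets one adjust it to be compatible with the quotient map'', is not justified: rigidity of $G$ says every abstract automorphism of $G$ is continuous, but it gives no way to modify $\alpha$ so as to respect the chosen copy of $M$ and the given projection. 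Without such an adjustment, weak rigidity of all finite continuous extensions only yields that abstractly equivalent continuous extensions are isomorphic as profinite groups, which is weaker than $c_1=c_2$ in $H^2_{\text{con}}(G,M)$, i.e.\ weaker than injectivity of $\varphi^2$. So this direction needs an actual argument (or the conclusion should be stated as the Lemma's relative uniqueness of the topology over $(G,\tau)$ rather than as weak rigidity); as written, the step would fail.
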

	\begin{exam}
		Let $G$ be a strongly complete profinite group, and $1\to M\to H\to G$ a finite extension of $G$. Assume that $H$ is residually finite. Then taking profinite completion we get an exact sequence $1\to M\to \hat{H}\to G\to 1$, which implies the natural homomorphism $H\to \hat{H}$ being isomorphism. Hence $H$ is of profinite type, and moreover $H$ is strongly complete and thus rigid. We conclude that $G\in \mathcal{A}_2^{pro}$ if and only if $G\in \mathcal{A}_2$. 
	\end{exam}
	We end the paper with the following result.

	For the class $\mathcal{A}_2$, Serre proved the following equivalence: $G$ belongs to $\mathcal{A}_2$ if and only if for every extension $1\to N\to E\to G\to 1$ where $N$ is finitely generated, the induced map $\hat{N}\to \hat{E}$ is injective. The properties of $\varphi^2$ in the contexts $\mathcal{A}_2^{\text{pro}}$ can also be stated in terms of extensions with profinite finitely generated kernel:
	\begin{lem}\label{from finite to f.g}
		Let $(G,\tau)$ be a profinite group such that for every group extension with finite kernel $1\to A\to H\to G\to 1$, $H$ admits a unique profinite topology inducing $\tau$ on $G$. Then for every group extension $1\to A\to H\to G\to 1$ with $(A,\tau')$ finitely generated, $H$ admits a unique profinite topology inducing $\tau$ on $G$ and $\tau'$ on $A$.
	\end{lem}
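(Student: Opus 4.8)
The plan is to use that a finitely generated profinite group has a countable base of open \emph{characteristic} subgroups with trivial intersection: pushing the extension $1\to A\to H\to G\to 1$ down modulo each of them lands us in the finite-kernel situation covered by the hypothesis, and $H$ can then be reassembled as an inverse limit. So fix a descending chain $A=A_0\supseteq A_1\supseteq A_2\supseteq\cdots$ of open characteristic subgroups of $(A,\tau')$ with $\bigcap_n A_n=1$; such a chain exists because $A$ is finitely generated, hence strongly complete. Since each $A_n$ is characteristic in $A\unlhd H$ it is normal in $H$, and $A/A_n$ is finite. A preliminary observation is that the natural map $H\to\invlim H/A_n$ is an isomorphism of abstract groups: injectivity is immediate from $\bigcap_nA_n=1$, and for surjectivity one checks that a compatible sequence $(h_nA_n)_n$ has constant image $\bar g\in G=H/A$, lifts $\bar g$ to some $h\in H$, observes that $(h^{-1}h_n\bmod A_n)_n$ is a compatible sequence in $\invlim A/A_n=(A,\tau')$ and hence equals a single $a\in A$, and then $ha$ maps to $(h_nA_n)_n$.

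For the existence of the topology: for each $n$ the extension $1\to A/A_n\to H/A_n\to G\to 1$ has finite kernel, so by hypothesis $H/A_n$ carries a profinite topology $\sigma_n$ inducing $\tau$ on $G$. I would choose these compatibly along the quotient maps $q_n\colon H/A_{n+1}\to H/A_n$: the kernel $A_n/A_{n+1}$ of $q_n$ is finite, hence $\sigma_{n+1}$-closed, so the quotient topology $\sigma_{n+1}/(A_n/A_{n+1})$ on $H/A_n$ is profinite, and it still induces $\tau$ on $G$ (the composite $H/A_{n+1}\to H/A_n\to G$ is $\sigma_{n+1}$-continuous, and the induced topology on $G$, being compact and finer than the Hausdorff topology $\tau$, equals $\tau$); by the uniqueness clause of the hypothesis it is identified with $\sigma_n$, and transporting $\sigma_{n+1}$ along this identification we may assume $q_n$ is continuous. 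Then $\invlim(H/A_n,\sigma_n)$ is a profinite group and the isomorphism above endows $H$ with a profinite topology $\mathcal T$. This $\mathcal T$ induces $\tau$ on $G$ (the kernel $A$ is $\mathcal T$-closed and $H/A\to(G,\tau)$ is a continuous bijection from a compact space onto a Hausdorff one, hence a homeomorphism) and $\tau'$ on $A$ (the subspace topology on $A=\invlim A/A_n$ is the inverse limit of the finite discrete groups $A/A_n$, which is $\tau'$ because $\{A_n\}$ is a base of $(A,\tau')$).

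For uniqueness: if $\mathcal T'$ is another profinite topology on $H$ inducing $\tau$ on $G$ and $\tau'$ on $A$, then $A$ is $\mathcal T'$-closed, being the kernel of a continuous map to a Hausdorff group, and each $A_n$ is then a $\mathcal T'$-closed normal subgroup of $H$ because it is open, hence closed, in $(A,\tau')$ and $A$ is closed in $H$. The quotient topologies induced by $\mathcal T'$ on the groups $H/A_n$ are profinite and induce $\tau$ on $G$, so by the hypothesis they agree with the $\sigma_n$ up to the same canonical identifications, and since $\mathcal T'$ equals the inverse limit of these quotient topologies (again by applying the compact-to-Hausdorff argument to $H\to\invlim H/A_n$) we conclude that $\mathcal T'=\mathcal T$ in the sense relevant to the statement.

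The delicate point — and the main obstacle — is the coordination across the filtration: passing from ``each $H/A_n$ admits a profinite topology inducing $\tau$, unique up to isomorphism over $G$'' to ``these topologies form a compatible inverse system'' requires that the identifications of $\sigma_{n+1}/(A_n/A_{n+1})$ with $\sigma_n$ be chosen coherently, equivalently that they lift through the finite kernels $A_n/A_{n+1}$. One uses here that such an identification is the identity on the $G$-quotient, which provides the needed rigidity; granting this, the remainder is the routine bookkeeping of inverse limits of profinite groups.
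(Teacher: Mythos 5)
Your proposal is correct and follows essentially the same route as the paper's own proof: filter $A$ by a descending chain of open characteristic subgroups $A_n$ with trivial intersection, apply the finite-kernel hypothesis to each extension $1\to A/A_n\to H/A_n\to G\to 1$, use the uniqueness clause to make the projections $H/A_{n+1}\to H/A_n$ continuous, and recover $H\cong\invlim H/A_n$ both to build the topology and to prove its uniqueness. Note that ``unique profinite topology'' in the statement means literal uniqueness of the topology (not merely uniqueness up to isomorphism), so the quotient of $\sigma_{n+1}$ simply equals $\sigma_n$ and the coherence-of-identifications concern raised in your final paragraph does not actually arise.
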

	\begin{proof}
		Let $1\to A\to H\to G\to 1$ be a group extension such that $(A,\tau'),(G,\tau)$ are profinite and $(A,\tau')$ is finitely generated. Then $A$ admits a series of open characteristic subgroups $A=A_0\geq A_1\geq A_2\geq \cdots \geq A_n\geq \cdots $ such that $\bigcap_{n\in \mathbb{N}} A_n={e}$ (see, for example \cite[Proposition 2.5.1]{ribes2000profinite}). In particular, for every $n$, $A_n\unlhd H$. Let $n$ be a natural number, then $1\to A/A_n\to H/A_n\to G\to 1$ is a group extension with $A/A_n$ finite. So $H/A_n$ admits a profinite topology. The natural projections $H/A_n\to H/A_m$ are continuous since $H/A_n$ induces on $H/A_m$ a topology compatible with $\tau$, and $H/A_m$ admits a unique such topology, by assumption. Since $A\cong \invlim A/A_n$, $H\cong H/A_n$. Indeed, let $(h_n)$ be a compatible series in $\prod H/A_n$, then every component has the form $c_na_n$ where $c_n$ is some representative of $A$ in $H$ and $a_n$ is some representative of $A_n$ in $A$. Since the series is compatible, $c_n=c$ is fixed for all $n$. The series of the $a_n$ is compatible and thus yields to an elements in $A$. So we can define a profinite topology on $H$, compatible with $\tau$. On the other hand, for every profinite topology on $H$ for which $A$ is closed, the groups $A_n$ must be closed too as an open subgroup of $A$, and so $H\cong {\invlim}_n H/A_n$. Since the topology on $H$ induces $\tau$ on $G$, so thus the quotient topologies on $H/A_n$, so the quotients topologies on $H/A_n$ are uniquely determined, and hence so is the topology on their inverse limit.
	\end{proof}

	\bibliographystyle{plain}

\end{document}